\newtheorem{theorem}{Theorem}
\theoremstyle{definition}
\newtheorem{definition}{Definition}
\newtheorem{remark}{Remark}
\newtheorem{example}{Example}
\newtheorem{conjecture}{Conjecture}
\newcommand{\R}{\mathbb{R}}
\newcommand{\Z}{\mathbb{Z}}
\newcommand{\K}{\mathbb{K}}
\newcommand{\C}{\mathbb{C}}
\newcommand{\Q}{\mathbb{Q}}
\begin{document}

\title{Invariants of long knots}
\author{Rinat Kashaev}
\address{Section de math\'ematiques, Universit\'e de Gen\`eve,
2-4 rue du Li\`evre, 1211 Gen\`eve 4, Suisse\\}
\email{rinat.kashaev@unige.ch}
\date{December 31, 2019}
 \dedicatory{Dedicated to Nikolai Reshetikhin on the occasion of his 60th birthday }
\begin{abstract}
 By using the notion of a rigid R-matrix in a monoidal category and the Reshetikhin--Turaev functor on the category of tangles, we review the definition of the associated invariant of long knots. In the framework of the monoidal categories of relations and spans over sets, by introducing racks associated with pointed groups, we illustrate the construction and the importance of consideration of long knots.  Else, by using the restricted dual of algebras and  Drinfeld's quantum double construction, we show that to any Hopf algebra $H$ with invertible antipode, one can associate a universal long knot invariant $Z_H(K)$ taking its values in the convolution algebra  $((D(H))^o)^*$ of the restricted dual Hopf algebra $(D(H))^o$ of the quantum double $D(H)$ of $H$. That extends the known constructions of universal invariants previously  considered mostly either in the case of finite dimensional Hopf algebras or by using some topological completions.
\end{abstract}
\maketitle
\section*{Introduction}

This paper is to a large extent a review of certain aspects of quantum invariants where we restrict ourselves exclusively to the context of long knots. More generally, one can consider also the string links, but it is known that the topological classes of string links are in bijection with the classes of ordinary links only if the number of components is one, i.e. if a string link is a long knot. 

We describe in detail the construction of invariants of long knots by using rigid R-matrices (solutions of the quantum Yang--Baxter relation) in monoidal categories. The importance of long knots (as opposed to usual closed knots) is illustrated by considering a general class of  group-theoretical R-matrices put into the context of monoidal categories of relations and spans over sets. These R-matrices are indexed by pointed groups that is groups with a distinguished element. The underlying racks seem not to be considered previously in the existing literature. 

Drinfeld's quantum double construction gives rise to a large class of rigid R-matrices, and the associated invariants get factorized through universal invariants associated to underlying Hopf algebras.
Such universal invariants were introduced and studied in a number of works~\cite{MR1025161,MR1124415,MR1153694,MR1227011,MR1324033, MR2186115,MR2253443,MR2251160} mostly either in the context of finite dimensional Hopf algebras or certain topological completions, for example by considering formal power series. 
Here, we  define the universal invariants purely algebraically and with minimal assumptions on the underlying Hopf algebras. In particular, we emphasize the case of infinite dimensional Hopf algebras. The distinguishing feature of our approach is the use of the restricted or finite dual of an algebra in conjunction with the quantum double construction.

The outline of the paper is as follows.

In section~\ref{sec:lk} we recall the definitions of long knots and their diagrams and introduce the notions of a normal diagram and a normalization of an arbitrary diagram.

In section~\ref{sec:ilkrrm} we recall the definition of a rigid R-matrix in a monoidal category and give a detailed description of a long knot invariant associated to a given rigid R-matrix.

In section~\ref{sec:rrmfr} we consider a special class of rigid R-matrices in the categories of relations and spans over sets. Each such R-matrix is associated to a pointed group with a canonical structure of a rack, and Theorem~\ref{thm:1}  identifies the associated invariant with the set of representations of the knot group into the group that underlies the rack. The example of an extended Heisenberg group gives rise to an invariant ideal in the polynomial algebra $\Q[t,t^{-1},s]$ closely related but not equivalent to the Alexander polynomial, at least if the latter admits higher multiplicity roots.

In section~\ref{sec:ifha}, based on the restricted dual of an algebra and Drinfeld's quantum double construction, we describe a universal invariant associated to any Hopf algebra with invertible antipode.

\subsection*{Acknowledgements} I would like to thank  Bruce Bartlett, L\'eo B\'enard, Joan Porti, Louis-HadrienRobert, Arkady Vaintrob, Roland van der Veen and Alexis Virelizier for valuable discussions.
\section{Long knots}\label{sec:lk}
\begin{definition}
An embedding $f\colon \R\to\R^3$ is called \emph{\color{blue}  long knot } if there exist  $a,b\in\R$ such that $f(t)=(0,0,t)$ for any $t<a$ or $t>b$.

Two long knots $f,g\colon \R\to\R^3$ are called  \emph{\color{blue}  equivalent}  if they are ambient isotopic, that is if there exists an ambient isotopy 
\begin{equation}
H\colon \R^3\times[0,1]\to  \R^3\times[0,1],\quad H(x,t)=(h_t(x),t),\quad h_0=\operatorname{id}_{\R^3},
\end{equation}
such that, for any $t\in[0,1]$, $h_t\circ f$ is a long knot and $g=h_1\circ f$. 

A long knot is called \emph{\color{blue}  tame or regular} if it is equivalent to a polygonal long knot.

An (oriented)  \emph{\color{blue}  long knot diagram} $D$ is a (1,1)-tangle diagram in $\R^2$ representing a tame long knot
\begin{equation}
D= \begin{tikzpicture}[baseline=-3]
 \node[draw] (a) at (0,0) {$D$};
 \draw[thick,->] (0,-.5)--(a)--(0,.5);
\end{tikzpicture}\ .
\end{equation}

Two long knot diagrams are called  \emph{\color{blue} (Reidemeister) equivalent} if they can be related to each other by a finite sequence of oriented Reidemeister moves of all types.
\end{definition}
\begin{remark}
The set of long knot diagrams is a monoid with respect to the composition 
\begin{equation}
D\circ D':=\ 
 \begin{tikzpicture}[baseline=8]
 \node[draw] (a) at (0,.7) {$D$};
  \node[draw] (b) at (0,0) {$D'$};
 \draw[thick,->] (0,-.5)--(b)--(a)--(0,1.2);
\end{tikzpicture}
\end{equation}

\end{remark}
\begin{remark} In the case of long knots, the Reidemeister theorem states that two long knot diagrams are equivalent if and only if the corresponding long knots are equivalent. Furthermore, 
a folklore theorem states that the natural map of long knot diagrams to closed oriented knot diagrams
 \begin{equation}
 \begin{tikzpicture}[baseline=-2]
 \node[draw] (a) at (0,0) {$D$};
 \draw[thick,->] (0,-.5)--(a)--(0,.5);
\end{tikzpicture}\ 
\mapsto
\ 
 \begin{tikzpicture}[baseline=-2]
 \coordinate (b) at (-.7,0);
 \node[draw] (a) at (0,0) {$D$};
 \draw[thick,->] (a) to [out=90,in=90] (b);
  \draw[thick] (b) to [out=-90,in=-90] (a);
\end{tikzpicture}
\end{equation}
induces a bijection between the respective Reidemeister equivalence classes. In particular, any invariant of long knots is also an invariant of closed knots.
\end{remark}

In what follows, we will always assume that a long knot diagram is put into a generic position with respect to the vertical axis so that all crossing  have non-vertical strands as in the letter X. We denote by $w(D)$ the writhe of $D$ defined as the number of positive crossings minus the number of negative crossings where a crossing is called positive if the ordered pair of tangent vectors $(v_{\text{overpass}},v_{\text{underpass}})$ induces the standard orientation of $\R^2$.
\begin{definition}
 A (long knot) diagram is called \emph{\color{blue} normal} if it has no local extrema (with respect to vertical direction) oriented from left to right like 
  \begin{tikzpicture}[baseline, scale=0.2]
 \draw[thick] (0,0) to [out=90,in=180] (1,1);
 \draw[thick,->] (1,1) to [out=0,in=90] (2,0);
\end{tikzpicture}\,
  and 
  \begin{tikzpicture}[baseline=-5, scale=0.2]
 \draw[thick] (0,0) to [out=-90,in=180] (1,-1);
 \draw[thick,->] (1,-1) to [out=0,in=-90] (2,0);
\end{tikzpicture} . 

To any diagram $D$, we associate its \emph{\color{blue} normalization} $\dot{D}$, the diagram obtained from $D$ by the replacements
\begin{equation}
  \begin{tikzpicture}[baseline=2, scale=0.3]
 \draw[thick] (0,0) to [out=90,in=180] (1,1);
 \draw[thick,->] (1,1) to [out=0,in=90] (2,0);
\end{tikzpicture}
\ \mapsto
\begin{tikzpicture}[baseline=5,xscale=.3,yscale=0.2]
 \coordinate (a0) at (0,0);
 \coordinate (a1) at (1,3);
 \coordinate (a2) at (2,0);
\draw[thick] (a1) to [out=180,in=135]  (a2);
\draw[line width=5, color=white] (a0) to [out=45,in=0]  (a1);
\draw[thick,->] (a0) to [out=45,in=0]  (a1);
\end{tikzpicture},
\quad
  \begin{tikzpicture}[baseline=-7, scale=0.3]
 \draw[thick] (0,0) to [out=-90,in=180] (1,-1);
 \draw[thick,->] (1,-1) to [out=0,in=-90] (2,0);
\end{tikzpicture}\ 
\mapsto
\begin{tikzpicture}[baseline=-12,xscale=.3,yscale=0.2]
 \coordinate (a0) at (0,0);
 \coordinate (a1) at (1,-3);
 \coordinate (a2) at (2,0);
\draw[thick] (a1) to [out=180,in=-135]  (a2);
\draw[line width=5, color=white] (a0) to [out=-45,in=0]  (a1);
\draw[thick,->] (a0) to [out=-45,in=0]  (a1);
\end{tikzpicture}.
\end{equation}
\end{definition}
It will be of special interest for us the normal long knot diagrams 
\begin{equation}\label{eq:xi+-}
 \xi^+:=
\begin{tikzpicture}[baseline=15,scale=.5]
 \coordinate (a0) at (0,0);
 \coordinate (a1) at (1,2);
 \coordinate (a2) at (0,2);
 \coordinate (a3) at (2,.5);
\coordinate (a4) at (1,.5);
\coordinate (a5) at (2,2.5);
\draw[thick] (a0) to [out=90,in=-90]  (a1);
\draw[thick] (a1) to [out=90,in=90]  (a2);
\draw[thick] (a3) to [out=-90,in=-90]  (a4);
\draw[thick,->] (a4) to [out=90,in=-90]  (a5);
\draw[line width=5, color=white] (a2) to [out=-90,in=90]  (a3);
\draw[thick] (a2) to [out=-90,in=90]  (a3);
\end{tikzpicture},
\quad
 \xi^-:=
\begin{tikzpicture}[baseline=15,scale=.5]
 \coordinate (a0) at (0,0);
 \coordinate (a1) at (1,2);
 \coordinate (a2) at (0,2);
 \coordinate (a3) at (2,.5);
\coordinate (a4) at (1,.5);
\coordinate (a5) at (2,2.5);
\draw[thick] (a2) to [out=-90,in=90]  (a3);

\draw[thick] (a1) to [out=90,in=90]  (a2);
\draw[thick] (a3) to [out=-90,in=-90]  (a4);

\draw[line width=5, color=white] (a0) to [out=90,in=-90]  (a1);
\draw[thick] (a0) to [out=90,in=-90]  (a1);

\draw[line width=5, color=white] (a4) to [out=90,in=-90]  (a5);
\draw[thick,->] (a4) to [out=90,in=-90]  (a5);
\end{tikzpicture},
\quad
\xi^n:=
\left\{
\begin{array}{cl}
 \begin{tikzpicture}[baseline]
 \draw[thick,->] (0,0)--(0,.3);
\end{tikzpicture} 
& \text{if }  n=0;      \\
  \underbrace{\xi^{\operatorname{sgn}(n)}\circ \dots\circ \xi^{\operatorname{sgn}(n)}}_{|n|\text{ times}}&\text{if }  n\ne 0    
\end{array}
\right.
\end{equation}
where $n\in\Z$, $\operatorname{sgn}(n):=n/|n|$ and we identify the signs $\pm$ with the numbers $\pm1$.

\begin{remark}
 Any normal long knot diagram has an even number of crossings. In particular, we have $w(\xi^n)=2n$.
\end{remark}

\section{Invariants of long knots from rigid $R$-matrices}\label{sec:ilkrrm}
We say  an object $G$ of a monoidal category $\mathcal{C}$ (with tensor product $\otimes$ and unit object $\mathbb{I}$) admits a left \emph{\color{blue} adjoint} if there exists an object $F$ and morphisms 
\begin{equation}
\varepsilon\colon F\otimes G\to \mathbb{I},\quad \eta\colon \mathbb{I}\to G\otimes F
\end{equation}
 such that
\begin{equation}
(\varepsilon\otimes \operatorname{id}_{F})\circ( \operatorname{id}_{F}\otimes \eta)=\operatorname{id}_{F},\quad ( \operatorname{id}_{G}\otimes \varepsilon)\circ(\eta\otimes \operatorname{id}_{G})=\operatorname{id}_{G}.
\end{equation}
In that case, the quadruple $(F,G,\varepsilon,\eta)$ is called  \emph{\color{blue} adjunction} in  $\mathcal{C}$.
\begin{definition}
 Let $\mathcal{C}$ be a monoidal category. An  \emph{\color{blue} R-matrix} over  an object $G\in\operatorname{Ob}\mathcal{C}$ is an element $r\in\operatorname{Aut}(G\otimes G)$ that satisfies the Yang--Baxter relation
 \begin{equation}
(r\otimes \operatorname{id}_G)\circ(\operatorname{id}_G\otimes r )\circ(r\otimes \operatorname{id}_G)=(\operatorname{id}_G\otimes r )\circ(r\otimes \operatorname{id}_G)\circ(\operatorname{id}_G\otimes r ).
\end{equation}
\end{definition}
\begin{definition}
Let $(F,G,\varepsilon,\eta)$  be an adjunction in  a monoidal category $\mathcal{C}$. An $R$-matrix $r$ over $G$ is called \emph{\color{blue} rigid} if the morphisms 
\begin{equation}\label{eq:tild-rpm1}
\widetilde{r^{\pm1}}:=(\varepsilon\otimes \operatorname{id}_{G\otimes F})\circ(\operatorname{id}_{F}\otimes r^{\pm1}\otimes\operatorname{id}_{ F})\circ( \operatorname{id}_{F\otimes G}\otimes \eta)
\end{equation}
are invertible.
\end{definition}
We also denote
\begin{equation}\label{eq:tild-tild-rpm1}
\widetilde{\widetilde{r^{\pm1}}}:=(\varepsilon\otimes \operatorname{id}_{F\otimes F})\circ(\operatorname{id}_{F}\otimes \widetilde{r^{\pm1}}\otimes\operatorname{id}_{ F})\circ( \operatorname{id}_{F\otimes F}\otimes \eta).
\end{equation}
One easily checks the identity
\begin{equation}
\widetilde{\widetilde{r^{-1}}}=\left(\widetilde{\widetilde{r}}\right)^{-1}.
\end{equation}

Associated to a rigid R-matrix $r$ over $G$ with an adjunction $(F,G,\varepsilon,\eta)$, the \emph{\color{blue} Reshetikhin--Turaev functor} $RT_r$ associates to any normal long knot diagram $D$ the endomorphism $RT_r(D)\colon G\to G$ obtained as follows.

Assuming that the non-trivial part of $D$ is contained in $\R\times [0,1]$, there exists a finite sequence of real numbers $0=t_0<t_1<\dots<t_{n-1}<t_n=1$ such that, for any $i\in \{0,\dots,n-1\}$,  the intersection $D_i:=D\cap(\R\times [t_i,t_{i+1}])$ is an ordered (from left to right) finite sequence of connected components each of which is isotopic relative to boundary either to one of the four types of segments 
\begin{equation}
\begin{tikzpicture}[yscale=.5,baseline]
\draw[thick,->] (0,0) to [out=90,in=-90] (0,1);
\end{tikzpicture}\ ,
\quad
\begin{tikzpicture}[yscale=.5,baseline]
\draw[thick,<-] (0,0) to [out=90,in=-90] (0,1);
\end{tikzpicture}\ ,
\quad
\begin{tikzpicture}[xscale=.5,baseline]
\draw[thick,<-] (0,0) to [out=90,in=90] (1,0);
\end{tikzpicture}\ ,
\quad
\begin{tikzpicture}[xscale=.5,baseline=15]
\draw[thick,<-] (0,1) to [out=-90,in=-90] (1,1);
\end{tikzpicture}
\end{equation}
or to one of the eight types of crossings
\begin{equation}
\begin{tikzpicture}[scale=.5,baseline]
\draw[thick,<-] (0,1) to [out=-90,in=90] (1,0);
\draw[line width=3pt,white] (1,1) to [out=-90,in=90] (0,0);
\draw[thick,<-] (1,1) to [out=-90,in=90] (0,0);
\end{tikzpicture}\ ,
\quad
\begin{tikzpicture}[scale=.5,baseline]
\draw[thick,<-] (1,1) to [out=-90,in=90] (0,0);
\draw[line width=3pt,white] (0,1) to [out=-90,in=90] (1,0);
\draw[thick,<-] (0,1) to [out=-90,in=90] (1,0);
\end{tikzpicture}\ ,
\quad
\begin{tikzpicture}[scale=.5,baseline]
\draw[thick,->] (1,1) to [out=-90,in=90] (0,0);
\draw[line width=3pt,white] (0,1) to [out=-90,in=90] (1,0);
\draw[thick,<-] (0,1) to [out=-90,in=90] (1,0);
\end{tikzpicture}\ ,
\quad
\begin{tikzpicture}[scale=.5,baseline]
\draw[thick,<-] (0,1) to [out=-90,in=90] (1,0);
\draw[line width=3pt,white] (1,1) to [out=-90,in=90] (0,0);
\draw[thick,->] (1,1) to [out=-90,in=90] (0,0);
\end{tikzpicture}\ ,
\quad
\begin{tikzpicture}[scale=.5,baseline]
\draw[thick,->] (0,1) to [out=-90,in=90] (1,0);
\draw[line width=3pt,white] (1,1) to [out=-90,in=90] (0,0);
\draw[thick,->] (1,1) to [out=-90,in=90] (0,0);
\end{tikzpicture}\ ,
\quad
\begin{tikzpicture}[scale=.5,baseline]
\draw[thick,->] (1,1) to [out=-90,in=90] (0,0);
\draw[line width=3pt,white] (0,1) to [out=-90,in=90] (1,0);
\draw[thick,->] (0,1) to [out=-90,in=90] (1,0);
\end{tikzpicture}\ ,
\quad
\begin{tikzpicture}[scale=.5,baseline]
\draw[thick,<-] (1,1) to [out=-90,in=90] (0,0);
\draw[line width=3pt,white] (0,1) to [out=-90,in=90] (1,0);
\draw[thick,->] (0,1) to [out=-90,in=90] (1,0);
\end{tikzpicture}\ ,
\quad
\begin{tikzpicture}[scale=.5,baseline]
\draw[thick,->] (0,1) to [out=-90,in=90] (1,0);
\draw[line width=3pt,white] (1,1) to [out=-90,in=90] (0,0);
\draw[thick,<-] (1,1) to [out=-90,in=90] (0,0);
\end{tikzpicture}\ .
\end{equation}
To such an intersection,  we associate 
 a morphism $f_i$ in $\mathcal{C}$ by taking the tensor product (from left to right) of the morphisms associated to the connected fragments of $D_i$ according to the following rules:
\begin{equation}
\begin{tikzpicture}[yscale=.5,baseline=3]
\draw[thick,->] (0,0) to [out=90,in=-90] (0,1);
\end{tikzpicture}\ \mapsto \operatorname{id}_G,
\quad
\begin{tikzpicture}[yscale=.5,baseline=4]
\draw[thick,<-] (0,0) to [out=90,in=-90] (0,1);
\end{tikzpicture}\ \mapsto \operatorname{id}_{F},
\quad
\begin{tikzpicture}[xscale=.5,baseline=2]
\draw[thick,<-] (0,0) to [out=90,in=90] (1,0);
\end{tikzpicture}\ \mapsto \varepsilon,
\quad
\begin{tikzpicture}[xscale=.5,baseline=22]
\draw[thick,<-] (0,1) to [out=-90,in=-90] (1,1);
\end{tikzpicture}\ \mapsto\eta,
\end{equation}
\begin{equation}
\begin{tikzpicture}[scale=.5,baseline=3]
\draw[thick,<-] (0,1) to [out=-90,in=90] (1,0);
\draw[line width=3pt,white] (1,1) to [out=-90,in=90] (0,0);
\draw[thick,<-] (1,1) to [out=-90,in=90] (0,0);
\end{tikzpicture}\ \mapsto r,
\quad
\begin{tikzpicture}[scale=.5,baseline=3]
\draw[thick,<-] (1,1) to [out=-90,in=90] (0,0);
\draw[line width=3pt,white] (0,1) to [out=-90,in=90] (1,0);
\draw[thick,<-] (0,1) to [out=-90,in=90] (1,0);
\end{tikzpicture}\ \mapsto r^{-1},
\end{equation}
\begin{equation}
\begin{tikzpicture}[scale=.5,baseline=3]
\draw[thick,->] (1,1) to [out=-90,in=90] (0,0);
\draw[line width=3pt,white] (0,1) to [out=-90,in=90] (1,0);
\draw[thick,<-] (0,1) to [out=-90,in=90] (1,0);
\end{tikzpicture}\ \mapsto \widetilde{r},
\quad
\begin{tikzpicture}[scale=.5,baseline=3]
\draw[thick,<-] (0,1) to [out=-90,in=90] (1,0);
\draw[line width=3pt,white] (1,1) to [out=-90,in=90] (0,0);
\draw[thick,->] (1,1) to [out=-90,in=90] (0,0);
\end{tikzpicture}\ \mapsto  \widetilde{r^{-1}},
\end{equation}
\begin{equation}
\begin{tikzpicture}[scale=.5,baseline=3]
\draw[thick,->] (0,1) to [out=-90,in=90] (1,0);
\draw[line width=3pt,white] (1,1) to [out=-90,in=90] (0,0);
\draw[thick,->] (1,1) to [out=-90,in=90] (0,0);
\end{tikzpicture}\ \mapsto \widetilde{ \widetilde{r}},
\quad
\begin{tikzpicture}[scale=.5,baseline=3]
\draw[thick,->] (1,1) to [out=-90,in=90] (0,0);
\draw[line width=3pt,white] (0,1) to [out=-90,in=90] (1,0);
\draw[thick,->] (0,1) to [out=-90,in=90] (1,0);
\end{tikzpicture}\ \mapsto  \widetilde{ \widetilde{r^{-1}}},
\end{equation}
\begin{equation}
\begin{tikzpicture}[scale=.5,baseline=3]
\draw[thick,<-] (1,1) to [out=-90,in=90] (0,0);
\draw[line width=3pt,white] (0,1) to [out=-90,in=90] (1,0);
\draw[thick,->] (0,1) to [out=-90,in=90] (1,0);
\end{tikzpicture}\ \mapsto \left( \widetilde{r^{-1}}\right)^{-1},
\quad
\begin{tikzpicture}[scale=.5,baseline=3]
\draw[thick,->] (0,1) to [out=-90,in=90] (1,0);
\draw[line width=3pt,white] (1,1) to [out=-90,in=90] (0,0);
\draw[thick,<-] (1,1) to [out=-90,in=90] (0,0);
\end{tikzpicture}\ \mapsto ( \widetilde{r})^{-1}.
\end{equation}
The morphism $RT_r(D)\colon G\to G$ associated to $D$ is obtained as the composition
\begin{equation}
RT_r(D):=f_{n-1}\circ \dots\circ f_1\circ f_0.
\end{equation}
\begin{example}
 Let $D=\xi^-$ defined in \eqref{eq:xi+-}. Then $RT_r(D)=f_3\circ f_2\circ f_1\circ f_0$ where
 \begin{equation}
f_0= \operatorname{id}_G\otimes \eta,\quad f_1= \operatorname{id}_G\otimes\left( \widetilde{r}\right)^{-1},\quad f_2=\left( \widetilde{r}\right)^{-1} \otimes\operatorname{id}_G,\quad f_3=\varepsilon \otimes\operatorname{id}_G.
\end{equation}

\end{example}
\begin{theorem}[\cite{MR1036112,MR1025161,MR2796628}]
 Let $r$ be a rigid $R$-matrix over an object $G$ of a monoidal category $\mathcal{C}$ with an adjunction $(F,G,\varepsilon,\eta)$. Then, for any long knot diagram $D$, the element 
 \begin{equation}
J_r(D):=RT_r(\dot{D}\circ \xi^{-w(\dot{D})/2})\in \operatorname{End}(G)
\end{equation}
depends on only the Reidemeister equivalence class of $D$.
\end{theorem}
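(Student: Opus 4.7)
The plan is to verify Reidemeister invariance move by move, with the writhe compensator $\xi^{-w(\dot D)/2}$ designed precisely to absorb the R1 anomaly.

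First, I would establish that $RT_r(D')$ is well defined for any normal long knot diagram $D'$, independent of the horizontal slicing used in its computation. Two admissible slicings admit a common refinement, and the interchange law $(f\otimes\operatorname{id})\circ(\operatorname{id}\otimes g)=(\operatorname{id}\otimes g)\circ(f\otimes\operatorname{id})$ of any monoidal category allows two events occurring at different heights and at disjoint horizontal positions to be reordered freely.

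Second, I would verify that $RT_r$ is invariant under the oriented R2 and R3 moves and under planar isotopies between normal diagrams. In the canonical orientation where both strands point downward at the crossing, R2 reduces directly to $r\circ r^{-1}=\operatorname{id}_{G\otimes G}$ and R3 reduces to the Yang--Baxter relation on $r$. Each of the seven non-canonical oriented crossings gives rise to its own variants of R2 and R3; these reduce to the canonical variants by bending strands with $\varepsilon$ and $\eta$ and applying the zigzag identities of the adjunction $(F,G,\varepsilon,\eta)$. The formulas \eqref{eq:tild-rpm1} and \eqref{eq:tild-tild-rpm1} for $\widetilde{r^{\pm 1}}$ and $\widetilde{\widetilde{r^{\pm 1}}}$ are arranged precisely so that these reductions succeed, with the rigidity of $r$ supplying the required inverses. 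Planar isotopies between normal diagrams (sliding independent fragments, straightening cups and caps) are handled in the same way.

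Third, I would check that the normalization $D\mapsto\dot D$ is compatible with R2 and R3: if $D$ and $D'$ are related by an R2 or R3 move then so are $\dot D$ and $\dot{D'}$, via normal-diagram moves, and $w(\dot D)=w(\dot{D'})$. Combined with the previous step, this gives invariance of $J_r$ under R2 and R3. Fourth, I would handle R1, which is the purpose of the writhe compensator. An R1 move applied to $D$ inserts a kink of sign $\epsilon\in\{+1,-1\}$, producing $D'$; a direct normal-move calculation identifies $\dot{D'}$ with $\dot D\circ\xi^{\epsilon}$ up to normal moves, and hence $w(\dot{D'})=w(\dot D)+2\epsilon$, since $w(\xi^n)=2n$. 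Therefore
\begin{equation*}
\dot{D'}\circ\xi^{-w(\dot{D'})/2}\ =\ \dot D\circ\xi^{\epsilon}\circ\xi^{-w(\dot D)/2-\epsilon}\ =\ \dot D\circ\xi^{-w(\dot D)/2},
\end{equation*}
using $\xi^{\epsilon}\circ\xi^{-\epsilon}=\xi^0=\operatorname{id}_G$. Applying $RT_r$ and invoking step two yields $J_r(D')=J_r(D)$.

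The main obstacle lies in step two: each of the seven non-canonical oriented crossings generates its own variants of R2 and R3 that must be reduced to the canonical form via the adjunction. The number of cases is large, but each reduction is mechanical once one unfolds \eqref{eq:tild-rpm1} and \eqref{eq:tild-tild-rpm1}. This is the standard Reshetikhin--Turaev functoriality on (1,1)-tangles, with the half-writhe compensator here playing the role that a twist morphism plays in the ribbon-category setting.
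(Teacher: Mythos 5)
Your overall architecture (well-definedness of $RT_r$ on normal diagrams, reduction of the non-canonical oriented RII/RIII variants to the canonical ones via the adjunction, compatibility of normalization with RII/RIII, and then RI via the writhe compensator) matches the paper's proof. The gap is in your RI step. You assert uniformly that inserting a kink of sign $\epsilon$ turns $\dot D$ into $\dot D\circ\xi^{\epsilon}$ up to normal moves, whence $w(\dot D')=w(\dot D)+2\epsilon$. This is false for half of the oriented RI variants. The normalization $D\mapsto\dot D$ only rewrites local extrema oriented \emph{left to right}, so the fate of a kink under normalization depends on its orientation type and not only on the sign of its crossing. The paper's case analysis makes this explicit: in its Cases~1 and~3 the kink is absorbed by the normalization itself (one gets $\dot D=\dot D'$ on the nose, or $\dot D\;\dot\sim\;\dot D'$ by an RII cancellation), so that $w(\dot D)=w(\dot D')$ and no compensation happens or is needed; only in Cases~2 and~4 does the kink survive as $\xi^{+}$ or $\xi^{-}$, shifting $w(\dot D)$ by $\pm2$ and getting cancelled by $\xi^{-w(\dot D)/2}$. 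Since the moves generating $\dot\sim$ all preserve the writhe of a normal diagram, $\dot D'$ cannot be $\dot\sim$-equivalent to $\dot D\circ\xi^{\epsilon}$ in the absorbed cases, so your identity is not merely unproved but wrong there, and your bookkeeping would compare the wrong diagrams.

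This is not a cosmetic omission: it is exactly the point that makes the half-integer exponent $-w(\dot D)/2$ legitimate. A single RI move changes $w(D)$ by $\pm1$, which could never be compensated by powers of $\xi^{\pm}$ (each of writhe $\pm2$); the construction works only because one uses $w(\dot D)$, which is always even and which a kink changes by $0$ or $\pm2$ according to its orientation type. Your argument never engages with this dichotomy. The fix is the paper's four-case analysis of the basic RI moves (together with the observation that all other RI variants reduce to these four using $\dot\sim$), after which the conclusion $J_r(D)=J_r(D')$ follows in each case, by different mechanisms. The remaining steps of your proposal are correct and essentially identical in strategy to the paper's, which organizes the reduction of the $48$ RIII variants through an action on $\operatorname{Sym}(3)\times\{\pm1\}^3$ and checks eight basic plus two composite RII moves.
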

\begin{proof}
 Let $\dot{\sim}$ be the equivalence relation on the set of normal long knot diagrams generated by the oriented versions of the Reidemeister moves RII and RIII and the moves R$0^\pm$ defined by the pictures
 \begin{equation}
\begin{tikzpicture}[xscale=.5,yscale=1.5,baseline=5]
\draw[thick,->] (1,0) to [out=90,in=90] (0,0);
\draw[thick] (.5,0)--(0,.4);
\end{tikzpicture}
\ \stackrel{\text{ R}0^+}{\longleftrightarrow}\ 
\begin{tikzpicture}[xscale=.5,yscale=1.5,baseline=5]
\draw[thick,->] (1,0) to [out=90,in=90] (0,0);
\draw[thick] (.5,0)--(1,.4);
\end{tikzpicture}
\ ,\qquad
\begin{tikzpicture}[xscale=.5,yscale=1.5,baseline=-10]
\draw[thick,->] (1,0) to [out=-90,in=-90] (0,0);
\draw[thick] (.5,0)--(0,-.4);
\end{tikzpicture}
\ \stackrel{\text{ R}0^-}{\longleftrightarrow}\ 
\begin{tikzpicture}[xscale=.5,yscale=1.5,baseline=-10]
\draw[thick,->] (1,0) to [out=-90,in=-90] (0,0);
\draw[thick] (.5,0)--(1,-.4);
\end{tikzpicture}
\end{equation}
with two possible orientations for the straight segment and two possibilities for the crossing. The strategy of the proof is to show first the implication
\begin{equation}\label{eq:dot-sim-=}
\dot{D}\ \dot{\sim}\  \dot{D}'\Rightarrow RT_r(\dot{D})=RT_r(\dot{D}')
\end{equation}
which, by taking into account the implication
\begin{equation}\label{eq:dot-sim-=writhe}
\dot{D}\ \dot{\sim}\  \dot{D}'\Rightarrow w(\dot{D})=w(\dot{D}'),
\end{equation}
 ensures the invariance of $J_r(D)$ under all Reidemeister moves RII and RIII, and then to verify the invariance under the Reidemeister moves RI. It is in this last part of the proof where the correction of $\dot{D}$ by $\xi^{-w(\dot{D})/2}$ is crucial.

Invariance of Reshetikhin--Turaev functor with respect to moves   $\text{ R}0^\pm$ follows from the equivalences

 \begin{equation}
\begin{tikzpicture}[xscale=.5,yscale=1.5,baseline=5]
\draw[thick,->] (1,0) to [out=90,in=90] (0,0);
\draw[thick] (.5,0)--(0,.4);
\end{tikzpicture}
\ \dot{\sim}\ 
\begin{tikzpicture}[xscale=.5,yscale=1.5,baseline=5]
\draw[thick,->] (1,0) to [out=90,in=90] (0,0);
\draw[thick] (.5,0)--(1,.4);
\end{tikzpicture}
\Leftrightarrow
\begin{tikzpicture}[xscale=.5,yscale=1.5,baseline=5]
\draw[thick,->] (1,0) to [out=90,in=90] (0,0);
\draw[thick] (2,.4) to [out=-90,in=-90] (1,0);
\draw[thick] (.5,0)--(0,.4);
\end{tikzpicture}
\ \dot{\sim}\ 
\begin{tikzpicture}[xscale=.5,yscale=1.5,baseline=5]
\draw[thick,->] (1,0) to [out=90,in=90] (0,0);
\draw[thick] (2,.4) to [out=-90,in=-90] (1,0);
\draw[thick] (.5,0)--(1,.4);
\end{tikzpicture}
\Leftrightarrow
\begin{tikzpicture}[scale=.5,baseline=3]
\draw[thick,->] (1,1) to [out=-90,in=90] (0,0);
\draw[thick] (0,1) to [out=-90,in=90] (1,0);
\end{tikzpicture}
\ \dot{\sim}\ 
\begin{tikzpicture}[xscale=.5,yscale=1.5,baseline=5]
\draw[thick,->] (1,0) to [out=90,in=90] (0,0);
\draw[thick] (2,.4) to [out=-90,in=-90] (1,0);
\draw[thick] (.5,0)--(1,.4);
\end{tikzpicture},
\end{equation}
\begin{equation}
\begin{tikzpicture}[xscale=.5,yscale=1.5,baseline=-10]
\draw[thick,->] (1,0) to [out=-90,in=-90] (0,0);
\draw[thick] (.5,0)--(0,-.4);
\end{tikzpicture}
\ \dot{\sim}\ 
\begin{tikzpicture}[xscale=.5,yscale=1.5,baseline=-10]
\draw[thick,->] (1,0) to [out=-90,in=-90] (0,0);
\draw[thick] (.5,0)--(1,-.4);
\end{tikzpicture}
\Leftrightarrow
\begin{tikzpicture}[xscale=.5,yscale=1.5,baseline=-10]
\draw[thick] (1,0) to [out=-90,in=-90] (0,0);
\draw[thick,->] (0,0) to [out=90,in=90] (-1,-.4);
\draw[thick] (.5,0)--(0,-.4);
\end{tikzpicture}
\ \dot{\sim}\ 
\begin{tikzpicture}[xscale=.5,yscale=1.5,baseline=-10]
\draw[thick] (1,0) to [out=-90,in=-90] (0,0);
\draw[thick,->] (0,0) to [out=90,in=90] (-1,-.4);
\draw[thick] (.5,0)--(1,-.4);
\end{tikzpicture}
\Leftrightarrow
\begin{tikzpicture}[xscale=.5,yscale=1.5,baseline=-10]
\draw[thick] (1,0) to [out=-90,in=-90] (0,0);
\draw[thick,->] (0,0) to [out=90,in=90] (-1,-.4);
\draw[thick] (.5,0)--(0,-.4);
\end{tikzpicture}
\ \dot{\sim}\ 
\begin{tikzpicture}[scale=.5,baseline=3]
\draw[thick,->] (1,1) to [out=-90,in=90] (0,0);
\draw[thick] (0,1) to [out=-90,in=90] (1,0);
\end{tikzpicture}
\end{equation}
and the definitions \eqref{eq:tild-rpm1} and  \eqref{eq:tild-tild-rpm1} of $\widetilde{r^{\pm1}}$ and  $\widetilde{\widetilde{r^{\pm1}}}$.

Invariance of $RT_r$ with respect to oriented $\text{RII}$ moves are easily checked first for eight basic moves
\begin{equation}
\begin{tikzpicture}[xscale=0.5,yscale=1,baseline=10]
\draw[thick] (1,0) to [out=135,in=-90] (.3,.5);
\draw[thick,->] (.3,.5) to [out=90,in=-135] (1,1);
\draw[line width=3pt,white] (0,0) to [out=45,in=-90] (.7,.5);
\draw[line width=3pt,white] (.7,.5) to [out=90,in=-45] (0,1);
\draw[thick] (0,0) to [out=45,in=-90] (.7,.5);
\draw[thick,->] (.7,.5) to [out=90,in=-45] (0,1);
\end{tikzpicture}
\ \dot{\sim}\ 
\begin{tikzpicture}[xscale=0.5,yscale=1,baseline=10]
\draw[thick,->] (1,0) to [out=135,in=-135] (1,1);
\draw[thick,->] (0,0) to [out=45,in=-45] (0,1);
\end{tikzpicture}
\ \dot{\sim}\ 
\begin{tikzpicture}[xscale=0.5,yscale=1,baseline=10]
\draw[thick] (0,0) to [out=45,in=-90] (.7,.5);
\draw[thick,->] (.7,.5) to [out=90,in=-45] (0,1);
\draw[line width=3pt,white] (1,0) to [out=135,in=-90] (.3,.5);
\draw[line width=3pt,white] (.3,.5) to [out=90,in=-135] (1,1);
\draw[thick] (1,0) to [out=135,in=-90] (.3,.5);
\draw[thick,->] (.3,.5) to [out=90,in=-135] (1,1);
\end{tikzpicture}
\stackrel{RT_r}{\longmapsto}  r^{-1}\circ r=\operatorname{id}_{G\otimes G}=r\circ r^{-1},
\end{equation}
\begin{equation}
\begin{tikzpicture}[xscale=0.5,yscale=1,baseline=10]
\draw[thick,->]  (.3,.5) to [out=-90,in=135](1,0);
\draw[thick] (.3,.5) to [out=90,in=-135] (1,1);
\draw[line width=3pt,white] (0,0) to [out=45,in=-90] (.7,.5);
\draw[line width=3pt,white] (.7,.5) to [out=90,in=-45] (0,1);
\draw[thick] (0,0) to [out=45,in=-90] (.7,.5);
\draw[thick,->] (.7,.5) to [out=90,in=-45] (0,1);
\end{tikzpicture}
\ \dot{\sim}\ 
\begin{tikzpicture}[xscale=0.5,yscale=1,baseline=10]
\draw[thick,->]  (1,1) to [out=-135,in=135] (1,0);
\draw[thick,->] (0,0) to [out=45,in=-45] (0,1);
\end{tikzpicture}
\ \dot{\sim}\ 
\begin{tikzpicture}[xscale=0.5,yscale=1,baseline=10]
\draw[thick] (0,0) to [out=45,in=-90] (.7,.5);
\draw[thick,->] (.7,.5) to [out=90,in=-45] (0,1);
\draw[line width=3pt,white] (1,0) to [out=135,in=-90] (.3,.5);
\draw[line width=3pt,white] (.3,.5) to [out=90,in=-135] (1,1);
\draw[thick,<-](1,0) to [out=135,in=-90]  (.3,.5);
\draw[thick] (.3,.5) to [out=90,in=-135] (1,1);
\end{tikzpicture}
\stackrel{RT_r}{\longmapsto}    \widetilde{r}\circ  (\widetilde{r})^{-1}=\operatorname{id}_{G\otimes F}= \widetilde{r^{-1}}\circ  \left(\widetilde{r^{-1}}\right)^{-1},
\end{equation}
\begin{equation}
\begin{tikzpicture}[xscale=0.5,yscale=1,baseline=10]
\draw[thick] (1,0) to [out=135,in=-90] (.3,.5);
\draw[thick,->] (.3,.5) to [out=90,in=-135] (1,1);
\draw[line width=3pt,white] (0,0) to [out=45,in=-90] (.7,.5);
\draw[line width=3pt,white] (.7,.5) to [out=90,in=-45] (0,1);
\draw[thick,<-] (0,0) to [out=45,in=-90] (.7,.5);
\draw[thick] (.7,.5) to [out=90,in=-45] (0,1);
\end{tikzpicture}
\ \dot{\sim}\ 
\begin{tikzpicture}[xscale=0.5,yscale=1,baseline=10]
\draw[thick,->] (1,0) to [out=135,in=-135] (1,1);
\draw[thick,<-] (0,0) to [out=45,in=-45] (0,1);
\end{tikzpicture}
\ \dot{\sim}\ 
\begin{tikzpicture}[xscale=0.5,yscale=1,baseline=10]
\draw[thick,<-] (0,0) to [out=45,in=-90] (.7,.5);
\draw[thick] (.7,.5) to [out=90,in=-45] (0,1);
\draw[line width=3pt,white] (1,0) to [out=135,in=-90] (.3,.5);
\draw[line width=3pt,white] (.3,.5) to [out=90,in=-135] (1,1);
\draw[thick] (1,0) to [out=135,in=-90] (.3,.5);
\draw[thick,->] (.3,.5) to [out=90,in=-135] (1,1);
\end{tikzpicture}
\stackrel{RT_r}{\longmapsto}  
\left(\widetilde{r^{-1}}\right)^{-1}\circ \widetilde{r^{-1}}=\operatorname{id}_{F\otimes G}= (\widetilde{r})^{-1}\circ \widetilde{r},
\end{equation}
\begin{equation}
\begin{tikzpicture}[xscale=0.5,yscale=1,baseline=10]
\draw[thick,<-] (1,0) to [out=135,in=-90] (.3,.5);
\draw[thick] (.3,.5) to [out=90,in=-135] (1,1);
\draw[line width=3pt,white] (0,0) to [out=45,in=-90] (.7,.5);
\draw[line width=3pt,white] (.7,.5) to [out=90,in=-45] (0,1);
\draw[thick,<-] (0,0) to [out=45,in=-90] (.7,.5);
\draw[thick] (.7,.5) to [out=90,in=-45] (0,1);
\end{tikzpicture}
\ \dot{\sim}\ 
\begin{tikzpicture}[xscale=0.5,yscale=1,baseline=10]
\draw[thick,<-] (1,0) to [out=135,in=-135] (1,1);
\draw[thick,<-] (0,0) to [out=45,in=-45] (0,1);
\end{tikzpicture}
\ \dot{\sim}\ 
\begin{tikzpicture}[xscale=0.5,yscale=1,baseline=10]
\draw[thick,<-] (0,0) to [out=45,in=-90] (.7,.5);
\draw[thick] (.7,.5) to [out=90,in=-45] (0,1);
\draw[line width=3pt,white] (1,0) to [out=135,in=-90] (.3,.5);
\draw[line width=3pt,white] (.3,.5) to [out=90,in=-135] (1,1);
\draw[thick,<-] (1,0) to [out=135,in=-90] (.3,.5);
\draw[thick] (.3,.5) to [out=90,in=-135] (1,1);
\end{tikzpicture}
\stackrel{RT_r}{\longmapsto}  
\widetilde{\widetilde{r^{-1}}}\circ \widetilde{\widetilde{r}}=\operatorname{id}_{F\otimes F}=  \widetilde{\widetilde{r}}\circ\widetilde{\widetilde{r^{-1}}},
\end{equation}
and then for two composite moves
\begin{equation}
\begin{tikzpicture}[xscale=1,yscale=.5,baseline=5]
\draw[thick] (1,0) to [out=90,in=0] (.5,.9);
\draw[thick,->] (.5,.9) to [out=180,in=90] (0,0);
\draw[thick] (1,1) to [out=-90,in=0] (.5,.1);
\draw[thick,->] (.5,.1) to [out=180,in=-90] (0,1);
\end{tikzpicture}
\ \dot{\sim}\ 
\begin{tikzpicture}[xscale=1,yscale=.5,baseline=5]
\draw[thick] (.2,0) to [out=45,in=-135] (.6,.4);
\draw[thick] (.6,.4) to [out=45,in=0] (.5,.9);
\draw[thick,->] (.5,.9) to [out=180,in=90] (0,0);
\draw[thick] (1,1) to [out=-90,in=0] (.5,.1);
\draw[thick,->] (.5,.1) to [out=180,in=-90] (0,1);
\end{tikzpicture}
\ \dot{\sim}\ 
\begin{tikzpicture}[xscale=1,yscale=.5,baseline=5]
\draw[thick] (.2,0) to [out=45,in=-135] (.6,.4);
\draw[thick] (.6,.4) to [out=45,in=0] (.5,.9);
\draw[thick,->] (.5,.9) to [out=180,in=90] (0,0);
\draw[thick] (1,1) to [out=-90,in=0] (.5,.1);
\draw[thick] (.5,.1) to [out=180,in=-135] (.4,.6);
\draw[thick,->] (.4,.6) to [out=45,in=-135] (.8,1);
\end{tikzpicture}
\ \dot{\sim}\ 
\begin{tikzpicture}[xscale=1,yscale=.5,baseline=5]
\draw[thick] (.2,0) to [out=90,in=0] (.3,.9);
\draw[thick,->] (.3,.9) to [out=180,in=90] (0,0);

\draw[thick] (1,1) to [out=-90,in=0] (.7,.1);
\draw[thick,->] (.7,.1) to [out=180,in=-90] (.8,1);
\end{tikzpicture}
\ = \ 
\begin{tikzpicture}[xscale=1,yscale=.5,baseline=5]
\draw[thick,->] (1,0) to [out=135,in=45] (0,0);
\draw[thick,->] (1,1) to [out=-135,in=-45] (0,1);
\end{tikzpicture}
\end{equation}
with two possible choices for the crossings.

In order to check invariance of $RT_r$ with respect to RIII moves, we remark that altogether there are 48 such moves which can indexed by the set $\operatorname{Sym}(3)\times \{\pm1\}^3$ as follows.

Given an RIII move, we enumerate the strands that intervene the move by following their bottom open ends from left to right
\begin{equation}
\begin{tikzpicture}[xscale=.5,yscale=1,baseline=10]
\draw[thick] (0,0)--(2,1);
\draw[thick] (1,0) to [out=90,in=-90] (.5,.5);
\draw[thick] (1,1) to [out=-90,in=90] (.5,.5);
\draw[thick] (2,0)-- (0,1);
\end{tikzpicture}
\ \dot{\sim}\ 
\begin{tikzpicture}[xscale=.5,yscale=1,baseline=10]
\draw[thick] (0,0)--(2,1);
\draw[thick] (1,0) to [out=90,in=-90] (1.5,.5);
\draw[thick] (1,1) to [out=-90,in=90] (1.5,.5);
\draw[thick] (2,0)-- (0,1);
\end{tikzpicture}
\quad \rightsquigarrow\quad
\begin{tikzpicture}[xscale=.5,yscale=1,baseline=10]
\node (a) at (0,0){\tiny1};
\node (b) at (1,0){\tiny2};
\node (c) at (2,0){\tiny3};
\draw[thick] (a)--(2,1);
\draw[thick] (b) to [out=90,in=-90] (.5,.5);
\draw[thick] (1,1) to [out=-90,in=90] (.5,.5);
\draw[thick] (c)-- (0,1);
\end{tikzpicture}
\ \dot{\sim}\ 
\begin{tikzpicture}[xscale=.5,yscale=1,baseline=10]
\node (a) at (0,0){\tiny1};
\node (b) at (1,0){\tiny2};
\node (c) at (2,0){\tiny3};
\draw[thick] (a)--(2,1);
\draw[thick] (b) to [out=90,in=-90] (1.5,.5);
\draw[thick] (1,1) to [out=-90,in=90] (1.5,.5);
\draw[thick] (c)-- (0,1);
\end{tikzpicture}
\end{equation}
and we define the associated element $(\sigma,\varepsilon)\in\operatorname{Sym}(3)\times \{\pm1\}^3$ by the conditions that for any $i\in \{1,2,3\}$, $\sigma(i)$ is the number of arcs on the $i$-th strand and
$\varepsilon_i=1$ if $i$-th strand is oriented upwards. For example, the RIII move
\begin{equation}
\begin{tikzpicture}[xscale=1,yscale=1,baseline=10]

\draw[thick,<-] (1,0) to [out=90,in=-90] (.5,.5);
\draw[thick] (1,1) to [out=-90,in=90] (.5,.5);

\draw[line width=3pt,white]  (2,0)-- (0,1);
\draw[thick,->] (2,0)-- (0,1);
\draw[line width=3pt,white] (0,0)--(2,1);
\draw[thick,->] (0,0)--(2,1);
\end{tikzpicture}
\ \dot{\sim}\ 
\begin{tikzpicture}[xscale=1,yscale=1,baseline=10]

\draw[thick,<-] (1,0) to [out=90,in=-90] (1.5,.5);
\draw[thick] (1,1) to [out=-90,in=90] (1.5,.5);

\draw[line width=3pt,white]  (2,0)-- (0,1);
\draw[thick,->] (2,0)-- (0,1);
\draw[line width=3pt,white] (0,0)--(2,1);
\draw[thick,->] (0,0)--(2,1);
\end{tikzpicture}
\end{equation}
corresponds to permutation $\sigma=(2,3)=(1)(2,3)$ and $\varepsilon=(1,-1,1)$ while the pair $(\sigma=\operatorname{id}, \varepsilon=(1,1,1))$ 
corresponds to the reference move associated to the Yang--Baxter relation
\begin{equation}\label{eq.basriii}
\begin{tikzpicture}[xscale=1,yscale=1,baseline=10]
\draw[thick,->] (2,0)-- (0,1);
\draw[line width=3pt,white]  (1,1) to [out=-90,in=90]  (0.5,.5);
\draw[thick] (1,0) to [out=90,in=-90] (0.5,.5);
\draw[thick,<-] (1,1) to [out=-90,in=90] (0.5,.5);

\draw[line width=3pt,white] (0,0)--(2,1);
\draw[thick,->] (0,0)--(2,1);
\end{tikzpicture}
\ \dot{\sim}\ 
\begin{tikzpicture}[xscale=1,yscale=1,baseline=10]
\draw[thick,->] (2,0)-- (0,1);
\draw[line width=3pt,white] (1,0) to [out=90,in=-90] (1.5,.5);
\draw[thick] (1,0) to [out=90,in=-90] (1.5,.5);
\draw[thick,<-] (1,1) to [out=-90,in=90] (1.5,.5);

\draw[line width=3pt,white] (0,0)--(2,1);
\draw[thick,->] (0,0)--(2,1);
\end{tikzpicture}
\ \stackrel{RT_r}{\longmapsto}\   r_1\circ r_2\circ r_1=r_2\circ r_1\circ r_2
\end{equation}
with the notations $r_1:=r\otimes \operatorname{id}_G$ and $r_2:=\operatorname{id}_G\otimes r $. 

One can now show that by using the moves RII and $\text{R}0^\pm$, any RIII move is equivalent to the reference move~\eqref{eq.basriii}.

Indeed, in the case $\varepsilon_1=-1$, we have the equivalences
\begin{equation}
\begin{tikzpicture}[xscale=.5,yscale=1,baseline=10]
\draw[thick,<-] (0,0)--(2,1);
\draw[thick] (1,0) to [out=90,in=-90] (1.5,.5);
\draw[thick] (1,1) to [out=-90,in=90] (1.5,.5);
\draw[thick] (2,0)-- (0,1);
\end{tikzpicture}
\ \dot{\sim}\ 
\begin{tikzpicture}[xscale=.5,yscale=1,baseline=10]
\draw[thick,<-] (0,0)--(2,1);
\draw[thick] (1,0) to [out=90,in=-90] (.5,.5);
\draw[thick] (1,1) to [out=-90,in=90] (.5,.5);
\draw[thick] (2,0)-- (0,1);
\end{tikzpicture}
\Leftrightarrow
\begin{tikzpicture}[xscale=.5,yscale=1,baseline=10]
\draw[thick,<-] (-1,1) to [out=-45,in=-135] (1,.4);
\draw[thick] (3,0) to [out=135,in=45] (1,.4);
\draw[thick] (1,0) to [out=90,in=-90] (1.5,.5);
\draw[thick] (1,1) to [out=-90,in=90] (1.5,.5);
\draw[thick] (2,0)-- (0,1);
\end{tikzpicture}
\ \dot{\sim}\ 
\begin{tikzpicture}[xscale=.5,yscale=1,baseline=10]
\draw[thick,<-] (-1,1) to [out=-45,in=-135] (1,.4);
\draw[thick] (3,0) to [out=135,in=45] (1,.4);
\draw[thick] (1,0) to [out=90,in=-90] (.5,.5);
\draw[thick] (1,1) to [out=-90,in=90] (.5,.5);
\draw[thick] (2,0)-- (0,1);
\end{tikzpicture}
\Leftrightarrow
\begin{tikzpicture}[xscale=.5,yscale=1,baseline=10]
\draw[thick] (0,0)--(2,1);
\draw[thick] (1,0) to [out=90,in=-90] (.5,.5);
\draw[thick] (1,1) to [out=-90,in=90] (.5,.5);
\draw[thick,->] (2,0)-- (0,1);
\end{tikzpicture}
\ \dot{\sim}\ 
\begin{tikzpicture}[xscale=.5,yscale=1,baseline=10]
\draw[thick] (0,0)--(2,1);
\draw[thick] (1,0) to [out=90,in=-90] (1.5,.5);
\draw[thick] (1,1) to [out=-90,in=90] (1.5,.5);
\draw[thick,->] (2,0)-- (0,1);
\end{tikzpicture}
\end{equation}
which imply the equivalence
\begin{equation}\label{eq:equiv(-1,.)}
(\sigma,(-1,\varepsilon_2,\varepsilon_3))\Leftrightarrow (\sigma\circ(1,2,3),(\varepsilon_2,\varepsilon_3,1))
\end{equation}
in the set $\operatorname{Sym}(3)\times \{\pm1\}^3$ thus allowing to reduce the number of negative components of $\varepsilon$. Additionally, a right action of the permutation group $\operatorname{Sym}(3)$ on the set $\operatorname{Sym}(3)\times \{\pm1\}^3$ is induced by the equivalences
\begin{equation}
\begin{tikzpicture}[xscale=.5,yscale=1,baseline=10]
\draw[thick] (0,0) -- (2,1);
\draw[thick] (1,0) to [out=90,in=-90] (1.5,.5);
\draw[thick] (1,1) to [out=-90,in=90] (1.5,.5);
\draw[thick] (2,0)--(0,1);
\end{tikzpicture}
\ \dot{\sim}\ 
\begin{tikzpicture}[xscale=.5,yscale=1,baseline=10]
\draw[thick] (0,0)--(2,1);
\draw[thick] (1,0) to [out=90,in=-90] (.5,.5);
\draw[thick] (1,1) to [out=-90,in=90] (.5,.5);
\draw[thick] (2,0)--(0,1);
\end{tikzpicture}
\Leftrightarrow
\begin{tikzpicture}[xscale=.5,yscale=.33,baseline]
\draw[thick] (2,-1) to [out=90,in=-90] (0,2);
\draw[thick] (0,0) to [out=90,in=-90](2,1);
\draw[thick] (1,1) to [out=-90,in=90] (1.5,.5);

\draw[thick] (0,-1) to [out=90,in=-90] (1.5,.5);
\draw[thick] (1,-1) to [out=90,in=-90] (0,0);

\draw[thick] (1,1) to [out=90,in=-90] (2,2);
\draw[thick] (2,1) to [out=90,in=-90] (1,2);
\end{tikzpicture}
\ \dot{\sim}\ 
\begin{tikzpicture}[xscale=.5,yscale=.33,baseline]
\draw[thick] (0,0) to [out=90,in=-90](2,1);
\draw[thick] (1,0) to [out=90,in=-90] (.5,.5);
\draw[thick] (2,-1) to [out=90,in=-90] (0,2);

\draw[thick] (0,-1) to [out=90,in=-90] (1,0);
\draw[thick] (1,-1) to [out=90,in=-90] (0,0);

\draw[thick] (.5,.5) to [out=90,in=-90] (2,2);
\draw[thick] (2,1) to [out=90,in=-90] (1,2);
\end{tikzpicture}
\Leftrightarrow
\begin{tikzpicture}[xscale=.5,yscale=1,baseline=10]
\draw[thick] (0,0)--(2,1);
\draw[thick] (1,0) to [out=90,in=-90] (.5,.5);
\draw[thick] (1,1) to [out=-90,in=90] (.5,.5);
\draw[thick] (2,0)--(0,1);
\end{tikzpicture}
\ \dot{\sim}\ 
\begin{tikzpicture}[xscale=.5,yscale=1,baseline=10]
\draw[thick] (0,0) -- (2,1);
\draw[thick] (1,0) to [out=90,in=-90] (1.5,.5);
\draw[thick] (1,1) to [out=-90,in=90] (1.5,.5);
\draw[thick] (2,0)--(0,1);
\end{tikzpicture}
\end{equation}
and
\begin{equation}
\begin{tikzpicture}[xscale=.5,yscale=1,baseline=10]
\draw[thick] (0,0) -- (2,1);
\draw[thick] (1,0) to [out=90,in=-90] (1.5,.5);
\draw[thick] (1,1) to [out=-90,in=90] (1.5,.5);
\draw[thick] (2,0)--(0,1);
\end{tikzpicture}
\ \dot{\sim}\ 
\begin{tikzpicture}[xscale=.5,yscale=1,baseline=10]
\draw[thick] (0,0)--(2,1);
\draw[thick] (1,0) to [out=90,in=-90] (.5,.5);
\draw[thick] (1,1) to [out=-90,in=90] (.5,.5);
\draw[thick] (2,0)-- (0,1);
\end{tikzpicture}
\Leftrightarrow
\begin{tikzpicture}[xscale=.5,yscale=.33,baseline]
\draw[thick] (0,-1) to [out=90,in=-90] (2,2);
\draw[thick] (0,2) to [out=-90,in=90] (1.5,.5);
\draw[thick] (2,0)to [out=90,in=-90] (0,1);
\draw[thick] (0,1) to [out=90,in=-90] (1,2);
\draw[thick] (1,0) to [out=90,in=-90](1.5,.5);
\draw[thick] (1,-1) to [out=90,in=-90] (2,0);
\draw[thick] (2,-1) to [out=90,in=-90] (1,0);
\end{tikzpicture}
\ \dot{\sim}\ 
\begin{tikzpicture}[xscale=.5,yscale=.33,baseline]
\draw[thick] (0,-1) to [out=90,in=-90] (2,2);
\draw[thick] (1,1) to [out=-90,in=90] (.5,.5);
\draw[thick] (2,0) to [out=90,in=-90]  (0,1);
\draw[thick] (0,1) to [out=90,in=-90] (1,2);
\draw[thick] (1,1) to [out=90,in=-90] (0,2);
\draw[thick] (1,-1) to [out=90,in=-90] (2,0);
\draw[thick] (2,-1) to [out=90,in=-90](.5,.5);
\end{tikzpicture}
\Leftrightarrow
\begin{tikzpicture}[xscale=.5,yscale=1,baseline=10]
\draw[thick] (0,0)--(2,1);
\draw[thick] (1,0) to [out=90,in=-90] (.5,.5);
\draw[thick] (1,1) to [out=-90,in=90] (.5,.5);
\draw[thick] (2,0)-- (0,1);
\end{tikzpicture}
\ \dot{\sim}\ 
\begin{tikzpicture}[xscale=.5,yscale=1,baseline=10]
\draw[thick] (0,0) -- (2,1);
\draw[thick] (1,0) to [out=90,in=-90] (1.5,.5);
\draw[thick] (1,1) to [out=-90,in=90] (1.5,.5);
\draw[thick] (2,0)--(0,1);
\end{tikzpicture}
\end{equation}
which correspond to the respective equivalences
\begin{equation}
(\sigma,\varepsilon)\Leftrightarrow (\sigma,\varepsilon)\circ (1,2)\ \text{ and }\ (\sigma,\varepsilon)\Leftrightarrow (\sigma,\varepsilon)\circ (2,3)
\end{equation}
in the set $\operatorname{Sym}(3)\times \{\pm1\}^3$ where we interpret $(\sigma,\varepsilon)\in\operatorname{Sym}(3)\times \{\pm1\}^3$  as the map
\begin{equation}
 (\sigma,\varepsilon)\colon \{1,2,3\}\to \{1,2,3\}\times\{\pm1\},\quad i\mapsto (\sigma(i),\varepsilon_i).
\end{equation}
Thus, in conjunction with the equivalence~\eqref{eq:equiv(-1,.)}, the right action of the group $\operatorname{Sym}(3)$ on the set $\operatorname{Sym}(3)\times \{\pm1\}^3$ establishes the equivalence of any RIII move to the reference move~\eqref{eq.basriii} and thereby the invariance of $RT_r$ with respect to all RIII moves.

Finally, in order to prove invariance of $J_r$ with respect to all RI moves, we need to check only the invariance with respect four basic moves of the form
\begin{equation}\label{eq:basicRI}
\begin{tikzpicture}[xscale=.5,yscale=.5,baseline=5]
\draw[thick] (0,0) to [out=0,in=0] (1,1);
\draw[thick] (1,1) to [out=180,in=180] (2,0);
\end{tikzpicture}
\ \sim \ 
\begin{tikzpicture}[xscale=.5,yscale=.5,baseline=5]
\draw[thick] (0,0) to [out=0,in=180] (1,1);
\draw[thick] (1,1) to [out=0,in=180] (2,0);
\end{tikzpicture}
\end{equation}
as all others are consequences of the basic ones and the intermediate equivalence relation $\dot{\sim}$ generated by the moves R0$^\pm$, RII and RIII:
\begin{equation}
\begin{tikzpicture}[xscale=.5,yscale=.5,baseline=5]
\draw[thick] (0,0)-- (0,1);
\end{tikzpicture}
\ = \ 
\begin{tikzpicture}[xscale=.25,yscale=.5,baseline=5]
\draw[thick] (0,0) to [out=90,in=180] (1,1);
\draw[thick] (1,1) to [out=0,in=180] (2,0);
\draw[thick] (2,0) to [out=0,in=-90] (3,1);
\end{tikzpicture}
\ \sim\ 
\begin{tikzpicture}[xscale=.5,yscale=.5,baseline=5]
\draw[thick] (0,0) to [out=0,in=0] (1,1);
\draw[thick] (1,1) to [out=180,in=180] (2,0);
\draw[thick] (2,0) to [out=0,in=-90] (3,1);
\end{tikzpicture}
\ \dot{\sim}\ 
\begin{tikzpicture}[xscale=.5,yscale=.5,baseline=5]
\draw[thick] (0,.5) to [out=90,in=90] (1,0);
\draw[thick] (0,.5) to [out=-90,in=-90] (1,1);
\end{tikzpicture}
\ \dot{\sim}\ 
\begin{tikzpicture}[xscale=.5,yscale=.5,baseline=5]
\draw[thick] (0,1) to [out=0,in=0] (1,0);
\draw[thick] (1,0) to [out=180,in=180] (2,1);
\draw[thick] (2,1) to [out=0,in=90] (3,0);
\end{tikzpicture}
\Rightarrow
\begin{tikzpicture}[xscale=.5,yscale=.5,baseline=5]
\draw[thick] (0,1) to [out=0,in=180] (1,0);
\draw[thick] (1,0) to [out=0,in=180] (2,1);
\end{tikzpicture}
\ \sim\ 
\begin{tikzpicture}[xscale=.5,yscale=.5,baseline=5]
\draw[thick] (0,1) to [out=0,in=0] (1,0);
\draw[thick] (1,0) to [out=180,in=180] (2,1);
\end{tikzpicture}
\end{equation}
and
\begin{equation}
\begin{tikzpicture}[xscale=.5,yscale=.5,baseline=5]
\draw[thick] (0,0)-- (0,1);
\end{tikzpicture}
\ = \ 
\begin{tikzpicture}[xscale=.25,yscale=.5,baseline=5]
\draw[thick] (0,0) to [out=90,in=180] (1,1);
\draw[thick] (1,1) to [out=0,in=180] (2,0);
\draw[thick] (2,0) to [out=0,in=-90] (3,1);
\end{tikzpicture}
\ \sim\ 
\begin{tikzpicture}[xscale=.5,yscale=.5,baseline=5]
\draw[thick] (0,0) to [out=90,in=180] (1,1);
\draw[thick] (1,1) to [out=0,in=0] (2,0);
\draw[thick] (2,0) to [out=180,in=180] (3,1);
\end{tikzpicture}
\ \dot{\sim}\ 
\begin{tikzpicture}[xscale=.5,yscale=.5,baseline=5]
\draw[thick] (0,0) to [out=90,in=90] (1,.5);
\draw[thick] (0,1) to [out=-90,in=-90] (1,.5);
\end{tikzpicture}\ .
\end{equation}

Les us analyse the four cases of \eqref{eq:basicRI} separately.

Case~1. If diagrams $D$ and $D'$ differ by the fragments  
\begin{equation}
D\ni\begin{tikzpicture}[xscale=.5,yscale=.5,baseline=5]
\draw[thick,->] (1,1) to [out=180,in=180] (2,0);
\draw[line width=3,white] (0,0) to [out=0,in=0] (1,1);
\draw[thick] (0,0) to [out=0,in=0] (1,1);
\end{tikzpicture}
\ ,\quad 
\begin{tikzpicture}[xscale=.5,yscale=.5,baseline=5]
\draw[thick] (0,0) to [out=0,in=180] (1,1);
\draw[thick,->] (1,1) to [out=0,in=180] (2,0);
\end{tikzpicture}
\in D',
\end{equation}
then, by the definition of the normalisation of a long knot diagram, we have the equality $\dot D=\dot D'$. Thus, $J_r(D)=J_r(D')$.

Case~2. Diagrams $D$ and $D'$ differ by the fragments 
\begin{equation}
D\ni \begin{tikzpicture}[xscale=.5,yscale=.5,baseline=5]
\draw[thick] (0,0) to [out=0,in=0] (1,1);
\draw[line width=3,white] (1,1) to [out=180,in=180] (2,0);
\draw[thick,->] (1,1) to [out=180,in=180] (2,0);
\end{tikzpicture}
\ ,\quad
\begin{tikzpicture}[xscale=.5,yscale=.5,baseline=5]
\draw[thick] (0,0) to [out=0,in=180] (1,1);
\draw[thick,->] (1,1) to [out=0,in=180] (2,0);
\end{tikzpicture}
\in\ D'
\end{equation}
so that the normalised diagrams $\dot{D}$ and $\dot{D}'$ differ by the fragments
\begin{equation}
\dot{D}\ni \begin{tikzpicture}[xscale=.5,yscale=.5,baseline=5]
\draw[thick] (0,0) to [out=0,in=0] (1,1);
\draw[line width=3,white] (1,1) to [out=180,in=180] (2,0);
\draw[thick,->] (1,1) to [out=180,in=180] (2,0);
\end{tikzpicture}
\ ,\quad
\begin{tikzpicture}[xscale=.5,yscale=.5,baseline=5]
\draw[thick,->] (1,1) to [out=180,in=180] (2,0);
\draw[line width=3,white] (0,0) to [out=0,in=0] (1,1);
\draw[thick] (0,0) to [out=0,in=0] (1,1);
\end{tikzpicture}
\in\ \dot{D}'
\end{equation}
which imply that
\begin{equation}\label{eq:case2writhe}
w(\dot{D})=2+w(\dot{D}')\Rightarrow\xi^+ \circ\xi^{-w(\dot{D})/2}= \xi^{-w(\dot{D}')/2}.
\end{equation}
On the other hand, we have the equivalence
\begin{equation}
\dot{D}\ni \begin{tikzpicture}[xscale=.5,yscale=.5,baseline=5]
\draw[thick] (0,0) to [out=0,in=0] (1,1);
\draw[line width=3,white] (1,1) to [out=180,in=180] (2,0);
\draw[thick,->] (1,1) to [out=180,in=180] (2,0);
\end{tikzpicture}
\ \dot{\sim}\ 
\begin{tikzpicture}[xscale=.5,yscale=.5,baseline=5]
\draw[thick] (0,0) to [out=0,in=0] (1,1);
\draw[thick,->] (3,1) to [out=180,in=180] (4,0);
\draw[line width=3,white] (2,0) to [out=180,in=0] (3,1);
\draw[thick] (2,0) to [out=180,in=0] (3,1);
\draw[line width=3,white]  (1,1) to [out=180,in=0] (2,0);
\draw[thick] (1,1) to [out=180,in=0] (2,0);
\end{tikzpicture}
\in \dot{D}'\circ\xi^+
\end{equation}
which, together with \eqref{eq:case2writhe}, implies that
\begin{equation}
 \dot{D}\circ \xi^{-w(\dot{D})/2}\  \dot{\sim}\ \dot{D}'\circ\xi^+\circ \xi^{-w(\dot{D})/2}= \dot{D}'\circ \xi^{-w(\dot{D}')/2}\Rightarrow J_r(D)=J_r(D').
\end{equation}

Case~3. Diagrams $D$ and $D'$ differ by the fragments 
\begin{equation}
D\ni \begin{tikzpicture}[xscale=.5,yscale=.5,baseline=5]
\draw[thick,<-] (0,0) to [out=0,in=0] (1,1);
\draw[line width=3,white] (1,1) to [out=180,in=180] (2,0);
\draw[thick] (1,1) to [out=180,in=180] (2,0);
\end{tikzpicture}
\ ,\quad
\begin{tikzpicture}[xscale=.5,yscale=.5,baseline=5]
\draw[thick,<-] (0,0) to [out=0,in=180] (1,1);
\draw[thick] (1,1) to [out=0,in=180] (2,0);
\end{tikzpicture}
\in\ D'
\end{equation}
so that 
\begin{equation}
\dot D\ni \begin{tikzpicture}[xscale=.5,yscale=.5,baseline=5]
\draw[thick,<-] (0,0) to [out=0,in=-90] (1.5,1);
\draw[thick] (1,2) to [out=180,in=90] (1.5,1);
\draw[line width=3,white] (2,0) to [out=180,in=-90] (.5,1);
\draw[line width=3,white](1,2) to [out=0,in=90] (.5,1);
\draw[thick] (2,0) to [out=180,in=-90] (.5,1);
\draw[thick] (1,2) to [out=0,in=90] (.5,1);
\end{tikzpicture}
\ \dot{\sim}\ 
\begin{tikzpicture}[xscale=.5,yscale=.5,baseline=5]
\draw[thick,<-] (0,0) to [out=0,in=180] (1,1);
\draw[thick] (1,1) to [out=0,in=180] (2,0);
\end{tikzpicture}
\in\ \dot D'
\Rightarrow J_r(D)=J_r(D').
\end{equation}

Case~4. Diagrams $D$ and $D'$ differ by the fragments 
\begin{equation}
D\ni\begin{tikzpicture}[xscale=.5,yscale=.5,baseline=5]
\draw[thick] (1,1) to [out=180,in=180] (2,0);
\draw[line width=3,white] (0,0) to [out=0,in=0] (1,1);
\draw[thick,<-] (0,0) to [out=0,in=0] (1,1);
\end{tikzpicture}
\ ,\quad 
\begin{tikzpicture}[xscale=.5,yscale=.5,baseline=5]
\draw[thick,<-] (0,0) to [out=0,in=180] (1,1);
\draw[thick] (1,1) to [out=0,in=180] (2,0);
\end{tikzpicture}
\in D'
\end{equation}
so that we have for the corresponding normalised diagrams
\begin{equation}
\dot D\ni \begin{tikzpicture}[xscale=.5,yscale=.5,baseline=5]
\draw[thick] (2,0) to [out=180,in=-90] (.5,1);
\draw[thick] (1,2) to [out=180,in=90] (1.5,1);
\draw[line width=3,white] (0,0) to [out=0,in=-90] (1.5,1);
\draw[line width=3,white](1,2) to [out=0,in=90] (.5,1);
\draw[thick,<-] (0,0) to [out=0,in=-90] (1.5,1);
\draw[thick] (1,2) to [out=0,in=90] (.5,1);
\end{tikzpicture}
\ \dot\sim\ 
\begin{tikzpicture}[xscale=.5,yscale=.5,baseline=5]
\draw[thick,<-] (-2,0) to [out=0,in=180] (-1,1);
\draw[thick] (-1,1) to [out=0,in=180] (0,0);
\draw[thick] (2,0) to [out=180,in=-90] (.5,1);
\draw[thick] (1,2) to [out=180,in=90] (1.5,1);
\draw[line width=3,white] (0,0) to [out=0,in=-90] (1.5,1);
\draw[line width=3,white](1,2) to [out=0,in=90] (.5,1);
\draw[thick] (0,0) to [out=0,in=-90] (1.5,1);
\draw[thick] (1,2) to [out=0,in=90] (.5,1);
\end{tikzpicture}
\in  \dot D'\circ \xi^- 
\end{equation}
and
\begin{equation}
w(\dot{D})=w(\dot{D}')-2\Rightarrow\xi^- \circ\xi^{-w(\dot{D})/2}= \xi^{-w(\dot{D}')/2}.
\end{equation}
Thus,
\begin{equation}
\dot D\circ\xi^{-w(\dot{D})/2}\ \dot\sim\  \dot D'\circ\xi^-\circ \xi^{-w(\dot{D})/2}=\dot D'\circ\xi^{-w(\dot{D}')/2}\Rightarrow  J_r(D)=J_r(D').
\end{equation}

\end{proof}
\section{Rigid R-matrices from racks}\label{sec:rrmfr}

A \emph{\color{blue} binary relation} from a set $X$ to a set $Y$ as a subset of the cartesian product $X\times Y$. The composition of two binary relations $R\subset X\times Y$ and $S\subset Y\times Z$ is the binary relation  $S\circ R\subset  X\times Z$ defined by
\begin{equation}
S\circ R:=\{ (x,z)\in X\times Z\mid \exists y\in Y\colon\ (x,y)\in R,\  (y,z)\in S\}.
\end{equation}

A  \emph{\color{blue} span} from a set $X$ to a set $Y$ is a triple $U=(U,\operatorname{s}_U,\operatorname{t}_U)$ where $U$ is a set and $\operatorname{s}_U\colon U\to X$ and $\operatorname{t}_U\colon U\to Y$ are set theoretical maps. The composition of two spans $U$ from $X$ to $Y$ and $V$ from $Y$ to $Z$ is the span from $X$ to $Z$ defined as the pullback  space (fibered product) $V\circ U:=U\times_Y V$ together with the natural projections to $X$ and $Z$. 

Two spans $U$ and $V$ from $X$ to $Y$ are called \emph{\color{blue} equivalent} if there exists a bijection $f\colon U\to V$ such that  $\operatorname{s}_V\circ f=\operatorname{s}_U$ and $\operatorname{t}_V\circ f=\operatorname{t}_U$. The composition of spans induces an associative binary operation for the equivalence classes of spans. 

Any binary relation $R\subset X\times Y$ is a special case of a span with $\operatorname{s}_R\colon R\to X$ and $\operatorname{t}_R\colon R\to Y$ being the canonical projections.

Let $\mathbf{Set}$ be the monoidal category of sets with the cartesian product as the monoidal product, and  $\mathbf{Rel}$ (respectively $\mathbf{Span}$) the extension of  $\mathbf{Set}$  with morphisms given by binary relations (respectively equivalence classes of spans).  For a morphism $Z\colon X\to Y$ in $\mathbf{Span}$, and any $(x,y)\in X\times Y$, we denote
\begin{equation}
Z(x,y):=\operatorname{s}_Z^{-1}(x)\cap\operatorname{t}_Z^{-1}(y).
\end{equation}
We have a canonical monoidal functor 
\begin{equation}
\varpi\colon\mathbf{Span}\to \mathbf{Rel}
\end{equation}
which is identity on the level of objects and for any morphism $Z\colon X\to Y$ in $\mathbf{Span}$, the corresponding morphism in $ \mathbf{Rel}$ is given by 
\begin{equation}
\varpi(Z)=\{(x,y)\in X\times Y\mid Z(x,y)\ne\emptyset\}.
\end{equation}
Notice that if $Z$ is a relation (as a particular case of spans) then $\varpi(Z)=Z$.

Given  a set theoretical map $f\colon X\to Y$, its  graph
\begin{equation}
\Gamma_f:=\{(x,f(x))\mid x\in X\}\subset X\times Y
\end{equation}
is naturally interpreted as a morphism $\rho(f)$ in  $\mathbf{Rel}$ and a morphism $\sigma(f)$ in $\mathbf{Span}$. 

The advantage of the categories $\mathbf{Rel}$ and $\mathbf{Span}$ over $\mathbf{Set}$ is their rigidity, namely, for any set $X$, the diagonal
$
\Delta_X:=\Gamma_{\operatorname{id}_X},
$
interpreted as morphisms $\varepsilon_X\colon X\times X\to \{0\}$ and $\eta_X\colon \{0\}\to X\times X$  in $\mathbf{Rel}$ and $\mathbf{Span}$, gives rise 
to a canonical adjunction $(X,X,\varepsilon_X,\eta_X)$ both in $\mathbf{Rel}$ and $\mathbf{Span}$.
 
Let $X$ be a  (left) \emph{\color{blue} rack}~\cite{MR1194995,MR0021002,MR638121,MR672410}, that is a set with a map
$$
X^2\to X^2,\quad (x,y)\mapsto (x\cdot y, x*y),
$$
such that the binary operation $x\cdot y$ is left self-distributive
$$
x\cdot(y\cdot z)=(x\cdot y)\cdot (x\cdot z),\quad \forall (x,y,z)\in X^3,
$$
and 
$$
x*(x\cdot y)=y,\quad \forall (x,y)\in X^2.
$$
It is easily verified that for any rack $X$, the set-theoretical map 
$$
r\colon X^2\to X^2,\quad (x,y)\mapsto (x\cdot y,x),
$$
is a rigid R-matrix in the categories $\mathbf{Rel}$ and $\mathbf{Span}$. 
Moreover, all the relevant morphisms  are realised by set-theoretical maps.
Indeed, introducing the maps
$$
r',s,s'\colon X^2\to X^2,\quad r'(x,y)=(y, y\cdot x),\ s(x,y)=(x*y,x),\ s'(x,y)=(y,y*x),
$$
we obtain
$$
r^{-1}=\widetilde{r}=s', \quad (\widetilde{r})^{-1}=r,\quad
 \widetilde{r^{-1}}=\widetilde{\widetilde{r}}=s,\quad \left(\widetilde{r^{-1}}\right)^{-1}=\widetilde{\widetilde{r^{-1}}}=r'.
$$

Thus, we obtain two long knot invariants $J_{\rho(r)}(D)$ in $\mathbf{Rel}$ and
$J_{\sigma(r)}(D)$ in $\mathbf{Span}$ which are related to each other by the equality
\begin{equation}
J_{\rho(r)}(D)=\varpi(J_{\sigma(r)}(D)).
\end{equation}

\subsection{Racks associated to pointed groups} Let $(G,\mu)$ be a \emph{\color{blue} pointed group} that is a group $G$ together with a fixed element $\mu\in G$. Then, it is easily verified that the set $G$ with the map
\begin{equation}
G\times G\to G\times G, \quad (g,h)\mapsto (g\mu g^{-1}h,g\mu^{-1} g^{-1}h)
\end{equation}
is a rack, and, thus, it gives rise to a rigid R-matrix 
\begin{equation}
r_{G,\mu}\colon G\times G\to G\times G,\quad (g,h)\mapsto (g\mu g^{-1}h,g),
\end{equation}
 both in  the categories $\mathbf{Rel}$ and $\mathbf{Span}$. In this way, we obtain two long knot invariants $J_{\rho(r_{G,\mu})}(D)$ and $J_{\sigma(r_{G,\mu})}(D)$ related to each other by the equality
 \begin{equation}
J_{\rho(r_{G,\mu})}(D)=\varpi(J_{\sigma(r_{G,\mu})}(D)).
\end{equation}
\begin{theorem}\label{thm:1}
 There exists a canonical choice of  a meridian-longitude pair  $(m,\ell)$ of long knots such that  the set $(J_{\sigma(r_{G,\mu})}(D))(1,\lambda)$ is in bijection with the set of group homomorphisms
 \begin{equation}\label{eq:gr-hom}
\{h\colon \pi_1(\R^3\setminus f(\R),x_0)\to G\mid h(m)=\mu,\ h(\ell)=\lambda\}
\end{equation}
where $f\colon\R\to\R^3$ is a long knot represented by $D$.
\end{theorem}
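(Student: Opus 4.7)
The plan is to identify an element of $(J_{\sigma(r_{G,\mu})}(D))(1,\lambda)$ with a valid labeling by $G$ of the arcs of the normalised, writhe-corrected diagram $\dot D\circ\xi^{-w(\dot D)/2}$, and then to match such labelings with representations of the knot group via the Wirtinger presentation. Unpacking the Reshetikhin--Turaev construction in $\mathbf{Span}$, every elementary morphism ($\operatorname{id}_G$, $\varepsilon$, $\eta$, $r^{\pm 1}$, $\widetilde{r^{\pm 1}}$, $\widetilde{\widetilde{r^{\pm 1}}}$ and their inverses) is the span associated to a set-theoretic map, so the fiber-product composition realises $J_{\sigma(r_{G,\mu})}(D)$ concretely as the set of labelings of every strand segment of the diagram by elements of $G$ subject to (i) two strands joined by a cup or a cap carrying equal labels, and (ii) the four incident labels at each crossing satisfying the relation prescribed by the appropriate variant of the R-matrix. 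The source and target maps of this span extract the labels of the bottom and top unbounded strands, so that $(J_{\sigma(r_{G,\mu})}(D))(1,\lambda)$ is the set of such colorings with bottom label $1$ and top label $\lambda$.

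Next, since $r_{G,\mu}(g,h)=(g\mu g^{-1}h,g)$, interpreting the label $g$ on an arc as encoding the meridian $g\mu g^{-1}\in G$ of that arc, a direct check shows that the R-matrix relation at each crossing is equivalent to the Wirtinger relation between the three meridians meeting at the crossing. Thus a valid labeling with bottom label $g_0=1$ extends uniquely to a group homomorphism $h\colon\pi_1(\R^3\setminus f(\R),x_0)\to G$ sending the canonical bottom-arc meridian $m$ to $\mu=1\cdot\mu\cdot 1^{-1}$; conversely any such $h$ determines its unique labeling by propagating the labels from bottom to top through the rack action. This establishes the bijection at the level of the meridional condition $h(m)=\mu$, independently of the longitudinal part.

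Finally, to match the top label to $h(\ell)$ for the canonical zero-framing longitude $\ell$, a short induction on the number of crossings along the knot strand shows that at each undercrossing the label of the understrand is multiplied on the left by $h(m_{\mathrm{over}})^{\pm 1}$ according to the crossing sign, so the top label equals $h(\ell_{\mathrm{bb}})\cdot g_0$, where $\ell_{\mathrm{bb}}=m^{w}\cdot\ell$ is the blackboard longitude of the underlying diagram and $w$ is its writhe. Applied to $\dot D\circ\xi^{-w(\dot D)/2}$, which by design has total writhe zero, the blackboard and zero-framing longitudes agree, so starting from $g_0=1$ the top label is exactly $\lambda=h(\ell)$; using that $h(m)=\mu$ and $h(\ell)=\lambda$ commute in the peripheral subgroup, the $\mu^{\pm 2}$ factors introduced by the $\xi^\pm$ curls cleanly cancel the framing contribution of $\dot D$. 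The main obstacle I anticipate is the oriented case analysis verifying item (ii) for each of the eight crossing types and the corresponding four variants of the R-matrix: one must check that the rack operation on upward and downward strands is consistently matched with the Wirtinger relation for either orientation, which is a finite but tedious verification.
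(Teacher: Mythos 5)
Your proposal is correct and follows essentially the same route as the paper: elements of the composed span are arc-labelings of $\dot D\circ\xi^{-w(\dot D)/2}$ subject to the rack relation at crossings, the map $g\mapsto g\mu g^{-1}$ converts these into Wirtinger data, and the writhe-zero correction makes the top label the image of the zero-framed longitude. The paper packages your ``propagate the labels from bottom to top'' step as an explicit isomorphism between the Wirtinger presentation $W$ and a presentation $E$ whose generators are the partial products $e_i$ of signed meridians, which is precisely the bookkeeping your induction performs.
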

\begin{proof}
Let $f\colon \R\to\R^3$ be a long knot whose image under the projection 
\begin{equation}
p\colon\R^3\to\R^2,\quad (x,y,z)\mapsto (y,z).
\end{equation}
 is the diagram $\tilde D:=\dot{D}\circ \xi^{-w(\dot{D})/2}$ with linearly ordered (from bottom to top) set of arcs $a_0,a_1,\dots, a_n$. As a result, the set of crossings acquires a linear order as well $\{c_i\mid 1\le i\le n\}$ where $c_i$ is the crossing separating the arcs $a_{i-1}$ and $a_i$ and with the over passing arc $a_{\kappa_i}$ for a  uniquely defined map 
\begin{equation}
\kappa\colon \{1,\dots,n\}\to\{0,\dots,n\}.
\end{equation}

 Let $t_0,t_1,\dots,t_n\in\R$ be a strictly increasing sequence such that $f(t)=(0,0,t)$ for all $t\not \in [t_0,t_n]$, and for each $i\in\{1,\dots, n-1\}$, $p(f(t_i))$ belongs to arc $a_i$ and is distinct from any crossing. Choose a base point $x_0=(s,0,0)$ with sufficiently large $s\in\R_{>0}$, a sufficiently small $\epsilon\in\R_{>0}$,  and define the following paths
 \begin{multline}
\alpha_0,\beta_i,\gamma_i\colon [0,1]\to \R^3,\ i\in\{0,\dots, n\},\quad \alpha_0(t)=(\epsilon\cos(2\pi t),-\epsilon\sin(2\pi t),t_0),\\
 \beta_i(t)=(1-t)x_0+(f(t_i)+(\epsilon,0,0))t,\quad
 \gamma_i(t)=f((1-t)t_i+t t_0)+(\epsilon,0,0).
\end{multline}

 To each arc $a_i$ of $\tilde D$, we associate the homotopy class
 \begin{equation}
e_i:=[\beta_i\cdot\gamma_i\cdot \bar\beta_0]\in  \pi_1(\R^3\setminus f(\R),x_0),
\end{equation}
so that $e_0=1$, and the Wirtinger generator  
\begin{equation}
w_i:=[\beta_i\cdot\gamma_i\cdot\alpha_0\cdot\bar\gamma_i\cdot\bar \beta_i] \in\pi_1(\R^3\setminus f(\R),x_0).
\end{equation}
We have the equalities
\begin{equation}\label{eq:wi=eiw0ei}
w_i=e_i w_0 e_i^{-1},\quad \forall i\in\{0,1,\dots,n\},
\end{equation}
\begin{equation}\label{eq:ei=wkiei-1}
e_i=w_{\kappa_i}^{\varepsilon_i}e_{i-1},\quad  \forall i\in\{1,\dots,n\},
\end{equation}
where $\varepsilon_i\in\{\pm1\}$ is the sign of the crossing $c_i$,
and 
\begin{equation}
e_i=w_{\kappa_i}^{\varepsilon_i}w_{\kappa_{i-1}}^{\varepsilon_{i-1}}\cdots w_{\kappa_1}^{\varepsilon_1},\quad \forall i\in\{1,\dots,n\}.
\end{equation}

We define the canonical meridian-longitude pair $(m,\ell)$ as follows
 \begin{equation}
m:=w_0,\quad \ell:=e_n.
\end{equation}
Taking into account the condition $\sum_{i=1}^n\varepsilon_i=0$, we see that $\ell$ has the trivial image in  $H_1(\R^3\setminus f(\R),\Z)$.

Let us show that the following finitely presented groups are isomorphic to the knot group $\pi_1(\R^3\setminus f(\R),x_0)$:
\begin{equation}
E:=\langle m,e_0,\dots, e_n\mid e_0=1,\ e_i=e_{\kappa_i}m^{\varepsilon_i} e_{\kappa_i}^{-1} e_{i-1},\ 1\le i\le n\rangle
\end{equation}
and 
\begin{equation}
W:=\langle w_0,\dots,w_n\mid w_{\kappa_i}^{\varepsilon_i}w_{i-1}=w_iw_{\kappa_i}^{\varepsilon_i},\ 1\le i\le n\rangle.
\end{equation}
As $W$ is nothing else but the Wirtinger presentation of  $\pi_1(\R^3\setminus f(\R),x_0)$, it suffices to see the isomorphism $E\simeq W$. To see the latter,
 we remark that there are two group homomorphisms 
\begin{equation}
u\colon W\to E,\quad  w_i\mapsto e_i me_i^{-1},\quad  i\in\{0,1,\dots,n\},
\end{equation}
and
\begin{equation}
v\colon E\to W,\quad  m\mapsto w_0,\quad e_0\mapsto 1,\quad e_i\mapsto w_{\kappa_i}^{\varepsilon_i}w_{\kappa_{i-1}}^{\varepsilon_{i-1}}\cdots w_{\kappa_1}^{\varepsilon_1},\quad \forall i\in\{1,\dots,n\}.
\end{equation}
Indeed, we have
\begin{multline}
 u(w_{\kappa_i})^{\varepsilon_i}u(w_{i-1})=u(w_i)u(w_{\kappa_i})^{\varepsilon_i}
 \Leftrightarrow  u(w_{\kappa_i})^{\varepsilon_i}e_{i-1}me_{i-1}^{-1}=e_ime_i^{-1} u(w_{\kappa_i})^{\varepsilon_i}\\
 \Leftrightarrow  e_i^{-1}u(w_{\kappa_i})^{\varepsilon_i}e_{i-1}m=me_i^{-1} u(w_{\kappa_i})^{\varepsilon_i}e_{i-1}\Leftarrow e_i^{-1} u(w_{\kappa_i})^{\varepsilon_i}e_{i-1}=1\\
 \Leftrightarrow   e_i=u(w_{\kappa_i})^{\varepsilon_i}e_{i-1} \Leftrightarrow e_i=e_{\kappa_i}m^{\varepsilon_i} e_{\kappa_i}^{-1} e_{i-1}
\end{multline}
implying that $u$ is a group homomorphism. We also have
\begin{multline}
v(e_i)=v(e_{\kappa_i})v(m)^{\varepsilon_i} v(e_{\kappa_i})^{-1} v(e_{i-1})\Leftrightarrow v(e_i) v(e_{i-1})^{-1}=v(e_{\kappa_i})w_0^{\varepsilon_i} v(e_{\kappa_i})^{-1}\\ 
\Leftrightarrow w_{\kappa_i}=v(e_{\kappa_i})w_0 v(e_{\kappa_i})^{-1}\Leftarrow \{w_i=v(e_i)w_0v(e_i)^{-1}\}_{0\le i\le n}
\end{multline}
and, for all $i\in\{1,\dots,n\}$,
\begin{multline}
v(e_i)w_0=w_{\kappa_i}^{\varepsilon_i}\cdots w_{\kappa_1}^{\varepsilon_1}w_0=w_{\kappa_i}^{\varepsilon_i}\cdots w_{\kappa_2}^{\varepsilon_2}w_1w_{\kappa_1}^{\varepsilon_1}
=\dots\\=w_{\kappa_i}^{\varepsilon_i}\cdots w_{\kappa_k}^{\varepsilon_k}w_{k-1}w_{\kappa_{k-1}}^{\varepsilon_{k-1}}\cdots w_{\kappa_1}^{\varepsilon_1}=\dots=
w_iw_{\kappa_i}^{\varepsilon_i}\cdots w_{\kappa_1}^{\varepsilon_1}=w_iv(e_i)
\end{multline}
implying that $v$ is a group homomorphism as well.

It remains to show that $v\circ u=\operatorname{id}_W$ and  $u\circ v=\operatorname{id}_E$. Indeed, for all $i\in\{0,\dots,n\}$, we have
\begin{equation}
 v(u(w_i))=v(e_i me_i^{-1})=v(e_i)v(m)v(e_i)^{-1}=v(e_i)w_0v(e_i)^{-1}=w_i
\end{equation}
implying that  $v\circ u=\operatorname{id}_W$.  We prove that $u(v(e_i))=e_i$ for all $i\in\{0,\dots,n\}$ by recursion on $i$. For $i=0$, we have
\begin{equation}
 u(v(e_0))=u(1)=1=e_0.
\end{equation}
Assuming that $u(v(e_{k-1}))=e_{k-1}$ for some $k\in\{1,\dots,n-1\}$, we calculate
\begin{equation}
 u(v(e_{k}))=u(w_{\kappa_k}^{\varepsilon_k}v(e_{k-1}))=u(w_{\kappa_k})^{\varepsilon_k}u(v(e_{k-1}))=e_{\kappa_k}m^{\varepsilon_k}e_{\kappa_k}^{-1}e_{k-1}=e_k.
\end{equation}
 
Now, any element $g$ of the set  $(J_{\sigma(r_{G,\mu})}(D))(1,\lambda)$ is a map
\begin{equation}
g\colon \{0,1,\dots,n\}\to G
\end{equation}
such that
\begin{equation}
g_0=1,\ g_n=\lambda,\ g_i=g_{\kappa_i}\mu^{\varepsilon_i} g_{\kappa_i}^{-1} g_{i-1},\quad \forall i\in\{1,\dots,n\}.
\end{equation}
That means that $g$ determines a unique group homomorphism
\begin{equation}
h_g\colon E\to G
\end{equation}
such that $h_g(m)=\mu$ and $h_g(e_i)=g_i$ for all $i\in\{0,1,\dots,n\}$. On the other hand, any group homomorphism $h\colon E\to G$ such that $h(m)=\mu$ and $h(\ell)=\lambda$ is of the form $h=h_g$ where $g_i=h(e_i)$. Thus, the map $g\mapsto h_g$ is a set-theoretical bijection between $(J_{\sigma(r_{G,\mu})}(D))(1,\lambda)$ and the set of group homomorphisms~\eqref{eq:gr-hom}.
\end{proof}
\begin{remark}
 Theorem~\ref{thm:1} illustrates the importance of considering long knots as opposed to closed knots. Namely, by closing a long knot, one identifies two open strands and all the associated data. In particular, one has to impose the equality $\lambda=1$ that corresponds to considering only those representations where the longitude is realized trivially. That means that in the case of closed diagrams one would obtain less powerful invariants.
\end{remark}
 \subsection{An extended Heisenberg group}
Let $R$ be a commutative unital ring and $G$, the subgroup of $GL(3,R)$ given by upper triangular matrices of the form 
\begin{equation}
\begin{pmatrix}
 a&b&d\\
 0&1&c\\
 0&0&a
\end{pmatrix},\quad a\in U(R),\ (b,c,d)\in R^3,
\end{equation}
where $U(R)$ the group of units of $R$. Then, the elements
\begin{equation}
\mu=\begin{pmatrix}
 t&0&0\\
 0&1&0\\
 0&0&t
\end{pmatrix},\quad 
\lambda=\begin{pmatrix}
 1&0&s\\
 0&1&0\\
 0&0&1
\end{pmatrix},
\end{equation}
commute with each other for any $t\in U(R)$ and $s\in R$. Given the fact that $G$ is an algebraic group, the invariant $(J_{\rho(r_{G,\mu})}(D))(1,\lambda)$ factorises through
an ideal $I_D$ in the polynomial algebra $\Q[t,t^{-1},s]$. Examples of calculations show that this ideal is often principal and is generated by the polynomial $\Delta_Ds$ where $\Delta_D:=\Delta_D(t)$ is the Alexander polynomial of $D$. Nonetheless, there are examples for which this is not the case, at least if the Alexander polynomial has multiple roots. In Table~\ref{fig:1}, we have collected few selected examples of calculations for knots where the Alexander polynomials of the first three examples of knots $3_1$, $4_1$ and $6_2$ have simple roots, while for all other examples the respective Alexander polynomials have multiple roots and they are factorized into products of the Alexander polynomials of the first three examples. In particular, one can see that the knots $8_{10}$ and $8_{20}$ have different Alexander polynomials but one and the same ideal, while
the knots $8_{18}$ and $9_{24}$ have one and the same Alexander polynomial but different ideals. 
\begin{table}[h]
 \[
\begin{array}{c|c|c}
  \text{knot}& \Delta_D& I_D  \\
  \hline
   3_{1}&1-t+t^2   &  (\Delta_{3_1}s) \\
    4_{1}&1-3t+t^2   &  (\Delta_{4_1}s) \\
     6_{2}&1 - 3 t + 3 t^2 - 3 t^3 + t^4 &  (\Delta_{6_2}s) \\
  8_{10}&\Delta_{3_1}^3   &  (\Delta_{3_1}s,s^2) \\
 8_ {18}&\Delta_{4_1} \Delta_{3_1}^2   &   (\Delta_{4_1} \Delta_{3_1}s)\\
 8_{20}&\Delta_{3_1}^2&  (\Delta_{3_1}s,s^2)\\
  9_{24}&\Delta_{4_1} \Delta_{3_1}^2&  (\Delta_{4_1} \Delta_{3_1}s,\Delta_{4_1} s^2)\\
   10_{99}&\Delta_{3_1}^4   &  (\Delta_{3_1}s,s^2) \\
   10_{123}&\Delta_{6_2}^2   &  (\Delta_{6_2}s) \\
   10_{137}&\Delta_{4_1}^2   &  (\Delta_{4_1}s,s^2) \\
 11a_{5}&\Delta_{4_1}^3&  (\Delta_{4_1}s,s^2)
\end{array}
\]
\caption{Examples of calculation with the extended Heisenberg group.}\label{fig:1}
\end{table}

\section{Invariants from Hopf algebras}\label{sec:ifha}
Let $\mathbf{Hopf}_\K$ be the category of Hopf algebras over a field $\K$ with invertible antipode. We have  a contravariant endofunctor $(\cdot)^o\colon \mathbf{Hopf}_\K\to \mathbf{Hopf}_\K$
that associates to a Hopf algebra $H$ with the multiplication $\nabla$ its 
 \emph{\color{blue} restricted dual} 
\begin{equation}
H^o:=(\nabla^*)^{-1}(H^*\otimes H^*)
\end{equation}
that can also be identified with the vector subspace of the algebraic dual $H^*$ generated by all matrix coefficients of all finite dimensional representations of $H$ \cite{MR1786197}.

Following~\cite{MR1381692}, Drinfeld's   \emph{\color{blue} quantum double}  of $H\in \operatorname{Ob}\mathbf{Hopf}_\K$ is a Hopf algebra $D(H)\in \operatorname{Ob}\mathbf{Hopf}_\K$ uniquely determined by the property that there are two Hopf algebra inclusions
\begin{equation}
\imath\colon H\to D(H),\quad \jmath\colon H^{o,\text{op}}\to D(H)
\end{equation}
such that $D(H)$ is generated by their images subject to the commutation relations
\begin{equation}\label{eq:comm-rel-j-i}
\jmath(f)\imath(x)= \langle f_{(1)},x_{(1)}\rangle  \langle f_{(3)},S(x_{(3)})\rangle\imath(x_{(2)})\jmath(f_{(2)}),\quad \forall (x,f)\in H\times H^o,
\end{equation}
where we use Sweedler's notation for the comultiplication
\begin{equation}
\Delta(x)=x_{(1)}\otimes x_{(2)},\quad (\Delta\otimes\operatorname{id})(\Delta(x))=x_{(1)}\otimes x_{(2)}\otimes x_{(3)},\ \dots
\end{equation}

The restricted dual of the quantum double $(D(H))^o$ is a   \emph{\color{blue} dual quasitriangular} Hopf algebra  with the  \emph{\color{blue} dual universal  $R$-matrix}
\begin{equation}
\varrho\colon (D(H))^o\otimes (D(H))^o\to \K,\quad x\otimes y\mapsto \langle x,\jmath(\imath^o(y))\rangle
\end{equation}
which, among other things, satisfies the Yang--Baxter relation
\begin{equation}
\varrho_{1,2}*\varrho_{1,3}*\varrho_{2,3}=\varrho_{2,3}*\varrho_{1,3}*\varrho_{1,2}
\end{equation}
 in the convolution algebra $(((D(H))^o)^{\otimes3})^*$, and any finite-dimensional right comodule
\begin{equation}
V\to V\otimes (D(H))^o,\quad v\mapsto v_{(0)}\otimes v_{(1)},
\end{equation}
gives rise to a rigid $R$-matrix 
\begin{equation}
r_V\colon V\otimes V\to V\otimes V,\quad u\otimes v\mapsto v_{(0)}\otimes u_{(0)}\langle \varrho,u_{(1)}\otimes v_{(1)}\rangle.
\end{equation}
This implies that there exists a \emph{\color{blue} universal invariant} of long knots $Z_H(K)$ taking its values in the 
convolution algebra $((D(H))^o)^*$ such that 
\begin{equation}
J_{r_V}(K)v=v_{(0)}\langle Z_H(K),v_{(1)}\rangle,\quad \forall v\in V.
\end{equation}
\begin{remark}
By using the coend of the monoidal braided category of finite dimensional comodules over $ (D(H))^o$, the universal invariant $Z_H(K)$ can be interpreted via the Lyubashenko theory \cite{MR1324033}.  In the case of finite dimensional quasitriangular Hopf algebras, this coend approach is further developed by Virelizier in \cite{MR2251160}. 
\end{remark}
\begin{remark}
If $H$ is a finite-dimensional Hopf algebra with a linear basis $\{e_i\}_{i\in I}$ and $\{e^i\}_{i\in I}$ is the dual linear basis of $H^*$, then, the dual universal $R$-matrix is conjugate  to the universal $R$-matrix
 \begin{equation}\label{eq.un-R-mat}
R:=\sum_{i\in I}\jmath(e^i)\otimes\imath(e_i)\in D(H)\otimes D(H)
\end{equation}
in the sense that, for any $x,y\in (D(H))^o=(D(H))^*$, we have
\begin{multline}
\langle x\otimes y,R\rangle=\sum_{i\in I}\langle x,\jmath(e^i)\rangle\langle y,\imath(e_i)\rangle=\sum_{i\in I}\langle x,\jmath(e^i)\rangle\langle \imath^o(y),e_i\rangle\\
=\left\langle x,\jmath\left(\sum_{i\in I}\langle \imath^o(y),e_i\rangle e^i\right)\right\rangle=\left\langle x,\jmath\left(\imath^o(y)\right)\right\rangle=\langle \varrho, x\otimes y\rangle.
\end{multline}

In the infinite-dimensional case, formula~\eqref{eq.un-R-mat} is formal but it is a convenient and useful tool for actual calculations.
\end{remark}
\begin{remark}
 The algebra inclusion $D(H)\subset ((D(H))^o)^*$ allows to think of the convolution algebra  $((D(H))^o)^*$  as a certain algebra completion of the quantum double $D(H)$.
\end{remark}
\subsection{A Hopf algebra associated to a two-dimensional Lie group}

 Let $B$ be the commutative Hopf algebra over $\C$ generated by an invertible group-like element $a$ and element $b$ with the coproduct 
 \begin{equation}\label{eq:coprod-b}
\Delta(b)=a\otimes b+b\otimes 1
\end{equation}
so that the set of monomials $\{b^ma^n\mid (m,n)\in\Z_{\ge0}\times \Z\}$ is a linear basis of $B$.

The restricted dual Hopf algebra $B^o$ is generated by two primitive elements $\psi$ and $\phi$ and group-like elements 
\begin{equation}
u^\psi,\ e^{v\phi},\quad (u,v)\in \C_{\ne0}\times\C,
\end{equation}
which satisfy the commutation relations
 \begin{multline}
 [\psi,\phi]=\phi,\quad[\psi,e^{v\phi}]=v\phi e^{v\phi},\quad u^\psi \phi u^{-\psi}=u\phi,\quad u^\psi e^{v\phi}u^{-\psi}=e^{uv\phi},\\
 [\psi, u^\psi]=[\phi, e^{v\phi}]=0,\quad u^\psi z^\psi=(uz)^\psi,\quad e^{v\phi}e^{w\phi}=e^{(v+w)\phi}.
\end{multline}
As linear forms on $B$, they are defined by the relations
\begin{multline}
\langle\psi,b^ma^n\rangle=\delta_{m,0}n,\quad \langle\phi,b^ma^n\rangle=\delta_{m,1},\\
\langle u^\psi,b^ma^n\rangle=\delta_{m,0}u^n,\quad \langle e^{v\phi},b^ma^n\rangle=v^m,
\quad \forall(m,n)\in\Z_{\ge0}\times \Z.
\end{multline}

Using the notation $\dot{x}:=\imath(x)$ for $x\in\{a,b\}$, $\dot{y}:=\jmath(y)$ for $y\in\{\psi,\phi\}$ and
\begin{equation}
 u^{\dot{\psi}}:=\jmath(u^{\psi}),\quad e^{v\dot{\phi}}:=\jmath(e^{v\psi}),
\end{equation}
the commutation relations~\eqref{eq:comm-rel-j-i} in the case of the quantum double $D(B)$ take the form
\begin{equation}
[\dot{\psi},\dot{b}]=\dot{b},\quad
[\dot{\phi},\dot{b}]=1-\dot{a},\quad u^{\dot{\psi}}\dot{b}u^{-\dot{\psi}} =u\dot{b},\quad
e^{v\dot{\phi}}\dot{b} e^{-v\dot{\phi}}=\dot{b}+(1-\dot{a})v
\end{equation}
and $\dot{a}$ is central. The formal universal $R$-matrix that reads as
\begin{equation}\label{eq:un-r-mat}
R:=(1\otimes\dot{a})^{\dot{\psi}\otimes 1}e^{\dot{\phi}\otimes\dot{b}}=\sum_{m,n\ge0}\frac 1{n!}\binom{\dot{\psi}}{m}\dot{\phi}^n\otimes (\dot{a}-1)^m \dot{b}^n
\end{equation}
should be interpreted as follows. 

Any finite dimensional right comodule $V$ over $(D(B))^o$ is a left module over $D(B)$
defined by
\begin{equation}
xv=v_{(0)}\langle v_{(1)},x\rangle,\quad \forall (x,v)\in D(B)\times V.
\end{equation}
Thus, it suffices to make sense of formula~\eqref{eq:un-r-mat} in the case of an arbirary finite-dimensional representation of $D(B)$ where the elements $1-\dot{a}$, $\dot{b}$ and $\dot{\phi}$ are necessarily  nilpotent, so that the formal infinite double sum
truncates to a well defined finite sum.
\begin{conjecture}
 The universal invariant associated to the Hopf algebra $B$ is of the form
$
Z_B(K)=(\Delta_K(\dot{a}))^{-1}
$
 where $\Delta_K(t)$ is the Alexander polynomial of $K$ normalised so that $\Delta_K(1)=1$ and $\Delta_K(t)=\Delta_K(1/t)$.
\end{conjecture}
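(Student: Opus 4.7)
The strategy is to reduce the identification of $Z_B(K) \in ((D(B))^o)^*$ to a computation on finite-dimensional representations of $D(B)$ and then match the answer with a known presentation of the Alexander polynomial via a 2-dimensional Burau-type R-matrix. The key observation is that $\dot a$ is central in $D(B)$, so on any finite-dimensional irreducible $D(B)$-module $V$ the element $\dot a$ acts as a scalar $t \in \C^*$, and the conjectured identity is equivalent to the statement that $J_{r_V}(K) = \Delta_K(t)^{-1}\,\mathrm{id}_V$ for every such $V$. Since $(D(B))^o$ is by definition spanned by the matrix coefficients of all finite-dimensional representations, verifying this family of scalar identities pins down $Z_B(K)$ uniquely.

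The first step is to classify, for each $t \in \C^*$, the finite-dimensional $D(B)$-modules on which $\dot a$ acts by $t$. Because $\dot b$ satisfies $u^{\dot\psi}\dot b u^{-\dot\psi}=u\dot b$ and $[\dot\phi,\dot b]=1-\dot a=(1-t)\,\mathrm{id}$ on such a module, one finds a canonical family indexed by $t$; the simplest nontrivial member is a 2-dimensional module $V_t$ in which $\dot\phi$ and $\dot b$ act nilpotently and $\dot\psi$ has eigenvalues $0,1$. On such a module the formal universal $R$-matrix
\begin{equation}
R=\sum_{m,n\ge 0}\frac{1}{n!}\binom{\dot\psi}{m}\dot\phi^n\otimes (\dot a-1)^m\dot b^n
\end{equation}
truncates to a finite sum (because $\dot\phi$ and $\dot b$ are nilpotent and the binomial coefficient vanishes outside a finite range), producing an explicit rigid R-matrix $r_{V_t}$ on $V_t\otimes V_t$. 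I would check by direct calculation that, after an appropriate change of basis, $r_{V_t}$ coincides with the classical Burau R-matrix associated to the parameter $t$, together with the compatible cup, cap and dual morphisms obtained from $\widetilde{r}$ and $\widetilde{\widetilde{r}}$ as defined in (4) and (6).

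Once this identification is established, the second step is to transport the classical computation of the Alexander polynomial from a $(1,1)$-tangle: for any long knot diagram $D$, closing the tangle in the normalized form $\dot D\circ \xi^{-w(\dot D)/2}$ and applying the Burau-type $RT_{r_{V_t}}$ yields the scalar $\Delta_K(t)^{-1}\,\mathrm{id}_{V_t}$, with the writhe correction $\xi^{-w(\dot D)/2}$ precisely absorbing the framing anomaly so as to produce the standard normalization $\Delta_K(1)=1$ and $\Delta_K(t)=\Delta_K(1/t)$. This computation is essentially the Kauffman/Rozansky--Saleur realization of the Alexander polynomial as a quantum invariant coming from the $ax+b$ Hopf algebra. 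Combining this with the defining relation $J_{r_V}(K)v=v_{(0)}\langle Z_B(K),v_{(1)}\rangle$ and the fact that $\dot a$ acts by $t$ on $V_t$ gives $\langle Z_B(K),c\rangle=\langle (\Delta_K(\dot a))^{-1},c\rangle$ for every matrix coefficient $c$ of $V_t$. Running over all $t\in\C^*$ and all finite-dimensional modules, the matrix coefficients separate $(D(B))^o$, so the equality lifts to the desired identity in $((D(B))^o)^*$.

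The main obstacle I foresee is twofold. First, one needs to verify that the 2-dimensional family $\{V_t\}_{t\in\C^*}$, together with perhaps a few higher-dimensional indecomposable modules required to access matrix coefficients involving $\dot\psi$ and $\dot\phi$ separately, is indeed sufficient to determine a functional on $(D(B))^o$; this is a representation-theoretic density question that is delicate because $D(B)$ is infinite-dimensional and its restricted dual $(D(B))^o$ may be substantially smaller than the full algebraic dual. Second, matching the writhe-corrected normalization $\xi^{-w(\dot D)/2}$ with the symmetric normalization $\Delta_K(t)=\Delta_K(1/t)$ requires a careful bookkeeping of the contribution of the ribbon/pivotal structure coming from $\widetilde{\widetilde{r}}$ on $V_t$, which dictates a specific choice of square root of the framing factor. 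Overcoming these two technical points should reduce the conjecture to the already classical computation of the Alexander polynomial from the Burau matrix.
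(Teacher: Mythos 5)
Your plan founders on a representation-theoretic obstruction that the paper itself points out: in any finite-dimensional representation of $D(B)$ the element $1-\dot a$ is necessarily nilpotent, so the family $\{V_t\}_{t\in\C^*}$ you propose does not exist for $t\neq 1$. Concretely, the commutation relation $[\dot\phi,\dot b]=1-\dot a$ forces $\operatorname{tr}(1-\dot a)=0$ on every finite-dimensional module; since $\dot a$ is central, on an irreducible module $\dot a=t\,\mathrm{id}$ gives $(1-t)\dim V=0$, hence $t=1$, and on a general finite-dimensional module all generalized eigenvalues of $\dot a$ equal $1$. (Your own description of $V_t$ already contains the contradiction: if $\dot\phi$ and $\dot b$ are both nilpotent $2\times 2$ matrices, their commutator is traceless, so it cannot equal $(1-t)\,\mathrm{id}$ for $t\neq 1$.) Consequently there is no finite-dimensional comodule on which $r_V$ becomes the Burau R-matrix at generic $t$, the "separation of points by matrix coefficients of the $V_t$" step collapses, and the classical Burau computation of $\Delta_K(t)$ cannot be transported into the category of finite-dimensional $D(B)$-modules. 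What the finite-dimensional modules actually see is only the Taylor expansion of $\Delta_K(t)^{-1}$ at $t=1$ (since $\Delta_K(\dot a)^{-1}=(1+\text{nilpotent})^{-1}$ is a finite sum there), which is exactly why the statement has Melvin--Morton--Rozansky flavour rather than Burau flavour.

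You should also be aware that this statement is a \emph{conjecture} in the paper: no proof is given. The author verifies it by direct computation for the trefoil and then offers a heuristic, namely $q$-deforming $B$ to $B_q$ with $ab=qba$, relating $D(B_q)$ to $U_q(sl_2)$ and its $n$-dimensional irreducibles (the colored Jones polynomials), and invoking the proven Melvin--Morton--Rozansky theorem in the limit $n\to\infty$, $q=t^{1/n}$, where the colored Jones polynomial tends to $\Delta_K(t)^{-1}$ and one recovers an \emph{infinite}-dimensional representation of $D(B)$ with $\dot a=t$. If you want to rescue your strategy, the Burau/$U_q(gl(1|1))$ picture lives precisely in that deformed or infinite-dimensional setting, and the real work is to justify the interchange of the limit with the evaluation against matrix coefficients of $(D(B))^o$ --- which is essentially the MMR route, not an independent elementary argument.
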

By direct computation, we were able to prove this conjecture in the case of the trefoil knot. Justification in the general case comes from the following reasoning.

The Hopf algebra $B$ can be $q$-deformed to a non-commutative Hopf algebra $B_q$ with the same coalgebra structure~\eqref{eq:coprod-b} but with $q$-comutative relation $ab=qba$. We have $B=B_1$. For $q$ not a root of unity, the quantum double $D(B_q)$ is closely related to the quantum group $U_q(sl_2)$. In particular, for each $n\in\Z_{>0}$, it admits an $n$-dimensional irreducible representation corresponding to the $n$-th colored Jones polynomial. In the limit  $n\to\infty$ with $q=t^{1/n}$ and fixed $t$, one recovers an infinite-dimensional representation of the Hopf algebra $B$ where the central element $\dot{a}$ takes the value $t$. On the other hand, according to the Melvin--Morton--Rozansky conjecture proven by Bar-Nathan and Garoufalidis in \cite{MR1389962} and  by Garoufalidis and L\^e in \cite{MR2860990}, the  $n$-th colored Jones polynomial in that limit tends to $(\Delta_K(t))^{-1}$.

\def\cprime{$'$} \def\cprime{$'$}

\end{document}